\spnewtheorem{assumption}{Assumption}{\bfseries}{\itshape}
\pgfplotsset{compat=1.6}
\providecommand{\norm}[1]{\left\lVert#1\right\rVert}
\newcommand{\normp}[2]{\norm{#1}_{H^{#2}}}
\DeclareMathOperator*{\argmin}{arg\,min}
\newcommand{\Rhn}{R^h_n}
\newcommand{\ha}{\frac{1}{2}}
\newcommand{\phiV}{\boldsymbol{\phi}_{\boldsymbol{V}}}
\newcommand{\phiu}{\phi_u}
\newcommand{\Vh}{\boldsymbol{V}^h}
\newcommand{\V}{\boldsymbol{V}}
\newcommand{\vu}{\boldsymbol{u}}
\newcommand{\vx}{{\boldsymbol{x}}}
\newcommand{\bphih}{\boldsymbol{\phi}^h}
\newcommand{\bpsih}{\boldsymbol{\psi}^h}
\newcommand{\VVh}{\mathcal{V}^h}
\newcommand{\vzero}{\boldsymbol{0}}
\newcommand{\mphi}{\begin{pmatrix} \phiu \\ \phiV \end{pmatrix}}
\newcommand{\mpsi}{\begin{pmatrix} \psi_u \\ {\boldsymbol{\psi}_{\boldsymbol{V}}} \end{pmatrix}}
\newcommand{\dV}{\,\mathrm{d}V}
\newcommand{\dvg}{\nabla\cdot}
\newcommand{\mygrad}{\nabla}
\newcommand{\Vhnn}{\boldsymbol{V}^h_{n+\frac{1}{2}}}
\newcommand{\Vnn}{\boldsymbol{V}_{n+\frac{1}{2}}}
\newcommand{\Vthnn}{\tilde{\boldsymbol{V}}^h_{n+\frac{1}{2}}}
\newcommand{\uhn}{u^h_n}
\newcommand{\uhnn}{u^h_{n+\frac{1}{2}}}
\newcommand{\uthnn}{\tilde{u}^h_{n+\frac{1}{2}}}
\newcommand{\uhnnn}{u^h_{n+1}}
\newcommand{\unn}{u_{n+\frac{1}{2}}}
\newcommand{\vunn}{u_{n+\frac{1}{2}}}
\newcommand{\pnn}{p_{n+\frac{1}{2}}}
\renewcommand{\vec}[1]{\mathbf{#1}}
\begin{document}

\title{Discrete Energy Laws for the First-Order System Least-Squares
  Finite-Element Approach}
\titlerunning{Energy Laws for FOSLS}

\author{J. H. Adler\inst{1} \and I. Lashuk\inst{1} \and
  S. P. MacLachlan\inst{2} \and L. T. Zikatanov\inst{3}}
\institute{Department of Mathematics, Tufts
  University, Medford, MA 02155, USA,\\
  \email{james.adler@tufts.edu, ilya.lashuk@gmail.com}  \and
  Department of Mathematics and Statistics, Memorial University of Newfoundland and Labrador,
  St. John's, Newfoundland and Labrador A1C 5S7, Canada,\\
  \email{smaclachlan@mun.ca} \and
  Department of Mathematics, Penn State, 
  University Park, PA 16802, USA,\\
  \email{ludmil@psu.edu}
}

\maketitle

\begin{abstract}
This paper analyzes the discrete energy laws associated with
first-order system least-squares (FOSLS)
discretizations of time-dependent partial differential equations. Using
the heat equation and the time-dependent
Stokes' equation as examples, we discuss how accurately a
FOSLS finite-element formulation adheres to the underlying energy law
associated with the physical system.  Using regularity arguments involving the initial
condition of the system, we are able to give bounds on the convergence
of the discrete energy law to its expected value (zero in the
examples presented here).  Numerical experiments are
performed, showing that the discrete energy laws hold with order
$\mathcal O\left(h^{2p}\right)$, where $h$ is the mesh spacing and $p$
is the order of the finite-element space.  Thus, the energy law
conformance is held with a higher order than the expected,
$\mathcal{O}\left(h^p\right)$, convergence of the
finite-element approximation.  Finally, we introduce an abstract framework for analyzing the energy laws of general FOSLS discretizations.
\end{abstract}

%% main text
\section{Introduction}\label{sec:intro}
First-order system least squares (FOSLS) is a finite-element methodology
that aims to reformulate a set of partial differential equations
(PDEs) as a system of first-order
equations \cite{1994CaiZ_LazarovR_ManteuffelT_McCormickS-aa,1997CaiZ_ManteuffelT_McCormickS-aa}.  The problem is posed as a minimization of a functional in
which the first-order differential terms appear quadratically, so that the functional
norm is equivalent to a norm meaningful for the given problem. In
equations of elliptic type, this
is usually a product $H^1$ norm. Some of the compelling features of the FOSLS methodology
include: self-adjoint discrete equations stemming from the
minimization principle; 
good operator conditioning stemming from the use of first-order formulations of the
PDE; and finite-element and multigrid performance that is optimal and
uniform in certain parameters (e.g.,
Reynolds number for the Navier-Stokes equations), stemming from uniform product-norm
equivalence.

Successful FOSLS formulations have been developed for a variety of
applications
\cite{1994BochevP_GunzburgerM-aa,2005BrambleJ_KolevT_PasciakJ-aa}.
% Many large-scale physical problems, including those described by
% self-adjoint elliptic partial differential equations, can be solved by
% minimizing a known ``energy" functional over an infinite-dimensional
% space of admissible functions. When properly posed, such optimization
% problems have the advantage that they can be discretized by the
% Rayleigh-Ritz process of minimizing the functional over a
% finite-dimensional subspace of the admissible function space.  If done
% with a correct formulation and an appropriate function space, this
% leads to a continuous and coercive weak form of the problem.
One example of a large-scale physical application is in magnetohydrodynamics
(MHD) 
\cite{2011AdlerJ_ManteuffelT_McCormickS_NoltingJ_RugeJ_TangL-aa,2010AdlerJ_ManteuffelT_McCormickS_RugeJ-aa,2010AdlerJ_ManteuffelT_McCormickS_RugeJ_SandersG-aa}.
These numerical methods have led to substantial improvements in MHD
simulation technology; however, several important estimates remain to
be analyzed to confirm their quantitive accuracy.  One of
these is the energy of the system.  Using an energetic-variational
approach
\cite{2005FengJ_LiuC_ShenJ_YueP-aa,1963GelfandI_FominS-aa,1979GiraultV_RaviartP-aa,2010HyonY_KwakD_LiuC-aa},
energy laws of the MHD system can be derived that show that the total
energy should decay as a direct result of the dissipation in the
system.  Initial computations show that the FOSLS method indeed
captures this energy law, but it remains to be shown why it should.%  (Figure \ref{energyMHD}).  

% \begin{figure}[h!]
%   \centering
%   \includegraphics[scale=0.35]{figures/energy_mhd.pdf}
%   \caption{Magnetic reconnection test problem of incompressible,
%     resistive MHD.  Simulation is done using a
%     current-vorticity-pressure formulation of FOSLS using a BDF2
%     time-stepping scheme.  $R_e$ and $S_L$ refer to the fluid and
%     magnetic Reynolds number in the system.  See
%     \cite{2013AdlerJ_BrezinaM_ManteuffelT_McCormickS_RugeJ_TangL-aa}
%       for more details.  Top: Conservative Energy decaying over time.
%       Bottom: Change in total energy over time (asterisk) compared with dissipation over time (circles).}
%   \label{energyMHD}
% \end{figure}

In this paper, we
describe the \emph{discrete} energy laws associated with FOSLS
discretizations of time-dependent PDEs, such as the heat equation or
Stokes' equation, and show quantitatively how they are
related to the continuous physical law.  While we only show results
for these ``simple'' linear systems, the results appear generalizable
to more complicated systems, such as MHD.  Getting the correct
energy law is not only important for numerical stability, but it is
crucial for capturing the correct physics, especially if singularities
or high contrasts in the solution are present.  

The paper is outlined as follows.  In Section \ref{sec:energylaws}, we
discuss the energy laws of a given system and describe their discrete
analogues.  Section \ref{sec:diffusion} analyzes the energy laws
associated with the FOSLS discretizations of the heat equation,
and the same is done for Stokes' equations in Section
\ref{sec:stokes}.  For both examples, we present numerical simulations
in Section \ref{s:numerics}.  Finally, we give some concluding remarks
and generalizations in Section \ref{sec:discussion}.

\section{Energy Laws}\label{sec:energylaws}
The energetic-variational approach (EVA) \cite{2005FengJ_LiuC_ShenJ_YueP-aa,1963GelfandI_FominS-aa,1979GiraultV_RaviartP-aa,2010HyonY_KwakD_LiuC-aa} of hydrodynamic systems in complex fluids is based on the second law of thermodynamics and relies on the fundamental principle that the change in the total energy of a system over time must equal the total dissipation of the system.  This energy principle plays a crucial role in understanding the interactions and coupling between different scales or phases in a complex fluid.  In general, any set of equations that describe the system can be derived from the underlying energy laws.  % The energetic variational approach takes the energy laws of a complex fluid model and, using the least action principle (LAP) \cite{1963GelfandI_FominS-aa} and the maximum/minimum dissipation principle (MDP) \cite{1953MachlupS_OnsagerL-aa,1953OnsagerL_MachlupS-aa,1931OnsagerL-aa,1931OnsagerL-ab}, creates a weak form of the physical system.  This weak form comes from the energy law and describes the principles of physics driving the system; therefore, it should be this weak form that is approximated in any numerical simulation.
The energetic variational principle is based on the energy dissipation law for the whole
coupled system:

\begin{equation}
\label{evalaw} \frac{\partial E_{total}}{\partial t} = -\mathcal{D},
\end{equation}
where $E_{total}$ is the total energy of the system, and $\mathcal{D}$ is the dissipation. % The

Simple fluids, where we assume no internal (or elastic) energies, can
also be described in this setting and yield the following energy law:
\begin{equation}
\label{eqn:simplefluid}
\frac{\partial}{\partial t} \left ( \frac{1}{2} \int_{\Omega}
  |\vec{u}|^2\, d\vec{x} \right ) = -\int_{\Omega}
\nu|\nabla\vec{u}|^2\, d\vec{x},
\end{equation}
where $\vec{u}$ represents the fluid velocity and $\nu$ is the fluid
viscosity, accounting for the dissipation in the system.
Applying the so-called least-action principle results in the integral
equation,
\begin{eqnarray}\label{stokesenergyweak}
\left\langle \frac{\partial\vec{u}}{\partial t} + \nabla p, \vec{y} \right\rangle
&=& \left\langle \nabla\cdot\nu\nabla\vec{u},\vec{y} \right\rangle, \forall
\vec{y} \in \mathcal{V},\nonumber
\end{eqnarray}
where we assume an incompressible fluid, $\nabla\cdot u = 0$, and an
appropriate Hilbert space, $\mathcal{V}$.  Here, we use
$\left\langle\cdot,\cdot\right\rangle$ to denote the $L^2(\Omega)$ inner product.
In strong form, we obtain the time-dependent Stokes' equations
(assuming appropriate boundary conditions):
\begin{align}
  \label{eqn:stokes1}
  \frac{\partial \vec{u}}{\partial t} + \nabla p - \nabla\cdot\nu\nabla\vec{u}
&=0,\\
\label{eqn:stokes2}\nabla\cdot\vec{u} &=0.
\end{align}  
Note that the energy law can also be derived directly from the PDE
itself.  First, we consider the weak form of \eqref{eqn:stokes1}-\eqref{eqn:stokes2}, multiplying~\eqref{eqn:stokes1} by $\vec{u}$ and
\eqref{eqn:stokes2} by $p$ and integrate over $\Omega$.
After integration by parts we obtain the following relations:
\begin{eqnarray*}
0&=&  \left\langle \frac{\partial \vec{u}}{\partial t} + \nabla p -
  \nabla\cdot\nu\nabla\vec{u}, \vec{u}\right\rangle + \left\langle
  \nabla\cdot\vec{u},p\right\rangle\\
&=& \left\langle\frac{\partial\vec{u}}{\partial t},\vec{u}\right\rangle +
  \left\langle\nabla p, \vec{u}\right\rangle -
  \left\langle\nabla\cdot\nu\nabla\vec{u},\vec{u}\right\rangle +
  \left\langle\nabla\cdot\vec{u},p\right\rangle\\
&=&  \frac{1}{2}\frac{\partial}{\partial t}\left\langle\vec{u},\vec{u}\right\rangle +
  \left\langle\nabla p,\vec{u}\right\rangle +
  \left\langle\nu\nabla\vec{u},\nabla\vec{u}\right\rangle - \left\langle\vec{u},\nabla
  p\right\rangle.
\end{eqnarray*}
Here, we have assumed that the
boundary conditions are such that the boundary terms, resulting
from the integration by parts, vanish. Hence, we have
\begin{equation*}
  \frac{1}{2}\frac{\partial}{\partial t}\left\langle\vec{u},\vec{u}\right\rangle
  =-\left\langle\nu\nabla\vec{u},\nabla\vec{u}\right\rangle.
\end{equation*}
This approach can also be applied to other PDEs, such as the
heat equation, to show similar energy dissipation relations.  Let $\nu$
be the thermal diffusivity of the body $\Omega$, and $u$ its temperature. Then 
the PDE describing the temperature distribution in $\Omega$
is as follows,
\begin{equation}\label{eqn:heat}
\frac{\partial u}{\partial t} - \nabla\cdot\nu\nabla u = 0,
\quad \mbox{ on } \Omega, \quad u = 0,
\quad \mbox{ on } \partial\Omega.
\end{equation}
As before, we multiply \eqref{eqn:heat} by $u$ and integrate over $\Omega$
to obtain that
\begin{eqnarray*}
0&=&\left\langle \frac{\partial u}{\partial t} -
   \nabla\cdot\nu\nabla u, u\right\rangle =
   \left\langle\frac{\partial u}{\partial t},u\right\rangle -
\left\langle\nabla\cdot\nu\nabla u,u\right\rangle\\
&=&\frac{1}{2}\frac{\partial}{\partial t}\left\langle u,u\right\rangle +
\left\langle\nu\nabla u,\nabla u\right\rangle 
\end{eqnarray*}
Hence, 
\begin{equation*}
\frac{1}{2}\frac{\partial}{\partial
  t}\left\langle u,u\right\rangle =-\left\langle\nu\nabla u,\nabla u\right\rangle,
\end{equation*}
which is the scalar version of \eqref{eqn:simplefluid}.  

For the remainder of the paper, we analyze \eqref{eqn:simplefluid},
specifically how closely the FOSLS method can approximate the energy
law discretely.  We will consider both the scalar (heat equation) and
the vector version (Stokes' equation) in the numerical results, as the
form of the energy law is identical.  
First, we discuss how moving to a finite-dimensional space affects
the energy law. 

\section{Heat Equation}\label{sec:diffusion}
First, we consider the heat equation, assuming a constant diffusion
coefficient $\nu=1$ for simplicity, homogeneous Dirichlet boundary
conditions, and a given initial condition:  
\begin{align}\label{eq:heat}
  \frac{\partial u(\vx,t)}{\partial t} &= \Delta u(\vx,t) \quad \forall\vx\in\Omega, \; \forall t > 0\\
  u(\vx,t) &= 0 \quad \forall \vx\in\partial\Omega, \; \forall t \ge 0 \\
  u(\vx,0) &= u_0(\vx) \quad \forall\vx\in\bar{\Omega}. 
\end{align}
To discretize the problem in time, we consider a symplectic, or
energy-conserving, time-stepping scheme such as Crank-Nicolson.  Given
a time step size, $ \tau $, and time $t_n =  \tau n$, we
approximate $u_n = u(\vec{x},t_n)$ with the following semi-discrete
version of \eqref{eq:heat},
\begin{align*}
\frac{u_{n+1}-u_n}{ \tau } = \frac{\Delta u_{n+1} + \Delta u_n}{2}
\end{align*}

To simplify the calculations later, we introduce an intermediate
approximation, $u_{n+\frac{1}{2}}$, and re-write the semi-discrete
problem as
\begin{gather}
  \begin{aligned}
    \frac{u_{n+\frac{1}{2}}-u_n}{\left( \frac{ \tau }{2} \right)} &= \Delta u_{n+\frac{1}{2}}  \\
    u_{n+\frac{1}{2}}(\vx) &= 0 \quad\forall\vx\in\partial\Omega,\;n=0,1,2,\ldots \\
    u_{n+1} &= 2u_{n+\frac{1}{2}} - u_n
  \end{aligned}
  \label{semid}
\end{gather}
% Thus, to find the next time step, we solve the first equation for
% $u_{n+\frac{1}{2}}$ and then use the last equation in \eqref{semid} to
% update.
\begin{remark}
To obtain the \emph{semi-discrete} energy law for \eqref{semid}, we
perform a similar procedure as done in Section \ref{sec:energylaws},
where we multiply the first equation in \eqref{semid} by
$u_{n+\frac{1}{2}}$ and integrate over the domain.  After some simple
calculations, we obtain 
the corresponding energy law, using $L^2-$norm notation:
\begin{equation}\label{eq:discrete_energy}
\frac{||u_{n+1}||^2 - ||u_n||^2}{2 \tau } = -||\mygrad u_{n+\frac{1}{2}}||^2
\end{equation}
\end{remark}
% The Laplacian, divergence, gradient and curl are understood as applied
% only to spatial variables, collectively denoted as $\vx$. I.e.,
% $\nabla=\nabla_\vx$ and so forth.

To use the FOSLS method, we now put the operator into a first-order
system.  Since we have reduced the problem to a reaction-diffusion
type problem, we introduce a new vector $\vec{V} = \mygrad u$, and use
the $H^1$-elliptic equivalent system \cite{1994CaiZ_LazarovR_ManteuffelT_McCormickS-aa,1997CaiZ_ManteuffelT_McCormickS-aa}:
\begin{equation}\label{FOSLSsystem}
  L_{ \tau } 
  \begin{pmatrix}
    \unn \\
    \Vnn
  \end{pmatrix}
  =
  \begin{pmatrix}
    -\nabla\cdot\Vnn +  \frac{2}{ \tau }\unn \\
    \Vnn-\nabla \unn \\
    \nabla\times\Vnn
  \end{pmatrix}
=
  \begin{pmatrix}
    \frac{2}{ \tau } u_{n} \\
   \vec{0} \\
    \vec{0}
  \end{pmatrix}.
\end{equation}
Note that Dirichlet boundary condition on the continuous solution,
$u$, gives rise to tangential boundary conditions on $\vec{V}$,
$\vec{V}\times\vec{n} = \vec{0}$, where $\vec{n}$ is the normal vector
to the boundary.  

Next, we consider a finite-dimensional subspace of a product $H^1$ space,
$\mathcal{V}^h$, and perform the FOSLS minimization of
\eqref{FOSLSsystem} over $\mathcal{V}^h$:

\begin{align*}
  \left( \uhnn, \Vhnn \right) & = 
  \argmin_{\left( u,\V \right)\in\VVh} 
  \norm{
    L_{ \tau } 
    \begin{pmatrix}
      u \\
      \V
    \end{pmatrix}
    -
    \begin{pmatrix}
      \frac{2}{ \tau } \uhn \\
      \vzero\\
      \vzero\\
    \end{pmatrix}
  }, \\
  \uhnnn & = 2\uhnn-\uhn.
\end{align*}
For each $n$, the above minimization results in the following weak set
of equations:
\begin{equation}\label{FOSLSheat}
  \left\langle
    L_{ \tau } 
    \begin{pmatrix}
      \uhnn \\
      \Vhnn
    \end{pmatrix}
    -
    \begin{pmatrix}
      \frac{2}{ \tau } \uhn \\
      \vzero\\
      \vzero\\
    \end{pmatrix},
    L_{ \tau } \bphih
  \right\rangle = 0 \quad \forall \bphih\in \VVh,
\end{equation}
where the inner products and norms are all in $L^2$ (scalar or vector,
depending on context), unless otherwise noted.

Note, that with the introduction of
$\vec{V}$, the discrete form of the FOSLS energy law can now be written,
\begin{equation}\label{FOSLSenergylaw}
\frac{||u_{n+1}^h||^2 - ||u_n^h||^2}{2 \tau } -||\vec{V}_{n+\frac{1}{2}}^h||^2\to 0,\quad\mbox{as}\quad h\to 0.  
\end{equation}
The goal of the remainder of this Section is to show how well this
energy law is satisfied.
To do so, we make use of the following
assumption.
\begin{assumption}\label{as:initcond}  Assume that the initial condition is smooth enough
  and the projection onto the finite-element space has the following property,
\[  \norm{u_0 - u_0^h}_{H^1}  \le C h^p \norm{u_0}_{H^{p+1}},\]
where $p$ is the order of the finite-element space being considered.
\end{assumption}
Then, using standard regularity estimates we obtain the following Lemma.
\begin{lemma}
  Let $\left\{ u_i \right\}_{i=0,1,\ldots}$ be a sequence of
  semi-discrete solutions to \eqref{semid}. Then, for any successive
  time steps, there exists a constant $C>0$, such that
  \begin{equation*}
    \normp{u_{n+1}}{p} \le C \normp{u_n}{p}
  \end{equation*}
\end{lemma}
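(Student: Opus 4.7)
The plan is to prove the stronger estimate $\norm{(-\Delta)^{p/2} u_{n+1}}_{L^2} \le \norm{(-\Delta)^{p/2} u_n}_{L^2}$ and then cash it in for the Sobolev bound via the standard elliptic-regularity equivalence $\norm{(-\Delta)^{p/2} v}_{L^2} \asymp \normp{v}{p}$. First, I would eliminate the half-step from \eqref{semid}: combining $(I-\frac{\tau}{2}\Delta)\unn = u_n$ with $u_{n+1} = 2\unn - u_n$ gives the familiar one-step Crank--Nicolson identity $u_{n+1} - u_n = \tau\Delta\unn$, together with the time-centred expression $\unn = (u_{n+1}+u_n)/2$.

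Next, I would test this identity in $L^2(\Omega)$ against $(-\Delta)^p\unn$. Because $(-\Delta)^p$ is self-adjoint under the relevant boundary conditions, the left-hand side telescopes as $\langle u_{n+1} - u_n,\,(-\Delta)^p\unn\rangle = \frac{1}{2}\bigl[\norm{(-\Delta)^{p/2} u_{n+1}}^2 - \norm{(-\Delta)^{p/2} u_n}^2\bigr]$, while the right-hand side is $-\tau\,\norm{(-\Delta)^{(p+1)/2}\unn}^2 \le 0$. This is nothing more than the $L^2$ energy identity \eqref{eq:discrete_energy} lifted one rung up the Sobolev scale; combining the resulting monotonicity with the norm equivalence then produces $\normp{u_{n+1}}{p} \le C\,\normp{u_n}{p}$.

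The main obstacle, and the reason the smoothness half of Assumption~\ref{as:initcond} is essential, is ensuring that every iterate satisfies the polyharmonic compatibility conditions $\Delta^j u_n|_{\partial\Omega} = 0$ for $0 \le j \le \lfloor(p-1)/2\rfloor$ needed both to justify the integration by parts in the telescoping step and to make the equivalence $\norm{(-\Delta)^{p/2}\cdot}_{L^2} \asymp \normp{\cdot}{p}$ applicable. I would close this gap by induction on $n$: the assumption places $u_0$ in $H^{p+1}(\Omega)\cap H^1_0(\Omega)$, and the identity $\Delta\unn = \frac{2}{\tau}(\unn - u_n)$ together with $\unn|_{\partial\Omega}=0$ propagates the required boundary traces from $u_n$ to $u_{n+1} = 2\unn - u_n$. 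With compatibility in hand, all boundary terms in the integration by parts vanish and the constant $C$ in the lemma simply absorbs the two-sided norm-equivalence constants.
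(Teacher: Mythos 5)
The paper itself offers no proof of this lemma: it is stated as a consequence of ``standard regularity estimates,'' with no argument given. Your proposal therefore supplies a genuine argument where the paper has only an assertion, and its core is sound: eliminating the half-step gives $u_{n+1}-u_n=\tau\Delta\unn$ with $\unn=\tfrac12(u_{n+1}+u_n)$, and testing against $(-\Delta)^p\unn$ (equivalently, writing $u_{n+1}=R(\tau A)u_n$ with $A=-\Delta$, Dirichlet conditions, and $|R(z)|=\bigl|\tfrac{1-z/2}{1+z/2}\bigr|\le 1$ on the spectrum) shows that $\|(-\Delta)^{p/2}u_{n+1}\|\le\|(-\Delta)^{p/2}u_n\|$. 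This is stronger than the stated lemma, since the resulting constant $C$ is just the product of the two norm-equivalence constants and is independent of $n$ and $\tau$, whereas the paper leaves the provenance of $C$ unspecified.

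The one point you should tighten is the base case of your trace-propagation induction. Your identity $\Delta\unn=\frac{2}{\tau}(\unn-u_n)$ does propagate $\Delta^j u_n|_{\partial\Omega}=0$ forward in $n$, but it does not create it: for $p\ge 3$ the argument needs $\Delta^j u_0|_{\partial\Omega}=0$ for $1\le j\le\lfloor (p-1)/2\rfloor$, and this is \emph{not} implied by $u_0\in H^{p+1}(\Omega)\cap H^1_0(\Omega)$, which is all that Assumption~\ref{as:initcond} literally provides. Without these compatibility conditions the half-step $\unn$ develops a boundary layer and the constant in $\normp{u_{n+1}}{p}\le C\normp{u_n}{p}$ need not be uniform in $\tau$, so the hypothesis ``smooth enough'' must be read as including polyharmonic compatibility of $u_0$ (as it does for the $\sin(\pi x)\sin(\pi y)$ data used in Section~\ref{s:numerics}). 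A second, smaller caveat: the two-sided equivalence $\|(-\Delta)^{p/2}v\|\asymp\normp{v}{p}$ for $p\ge 2$ invokes elliptic regularity beyond what a polygonal domain such as $(0,1)^2$ generically supplies; this is the same looseness the paper accepts by appealing to ``standard regularity estimates,'' but it is worth stating explicitly if you spell the proof out.
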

A consequence of this regularity estimate is a bound on the error in
the approximation.
\begin{lemma}
  \label{errorest}
  % if we re-scale the FOSLS method, will it help to prove this statement?
  Let $f\in H^{p}\cap H^1_0$ and let the pair $\left( u^h, \Vh \right)\in\VVh$ solve 
  \begin{equation*}
    \begin{pmatrix}
      u^h \\
      \Vh
    \end{pmatrix}
    =
    \argmin_{
      \left(u, \V \right) \in \VVh
    }\norm{
      L_{ \tau } 
      \begin{pmatrix}
	u \\
	\V
      \end{pmatrix}
      -
      \begin{pmatrix}
	\frac{2}{ \tau } f \\
	\vzero\\
	\vzero\\
      \end{pmatrix}
    }^2.
  \end{equation*}
  Let $\hat{u}$ be the exact solution of the corresponding PDE, i.e.,
  \begin{align*}
    -\Delta\hat{u} + \frac{2}{ \tau } \hat{u} &= \frac{2}{ \tau } f \quad\text{in }\partial\Omega, \\
    \hat{u} &= 0 \quad \text{on } \partial\Omega.
  \end{align*}
  Then,
  \begin{equation*}
    \norm{u^h-\hat{u}}_{H^1} \le \frac{C(\tau) h^p}{ \tau } \norm{f}_{H^{p-1}},
  \end{equation*}
  where the constant $C(\tau)$ may also depend on $ \tau $.
\end{lemma}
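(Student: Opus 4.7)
The plan is to apply the standard FOSLS analysis: the least-squares functional associated with $L_\tau$ is norm-equivalent to the product $H^1$ norm on the admissible space (this is the $H^1$-ellipticity invoked from \cite{1994CaiZ_LazarovR_ManteuffelT_McCormickS-aa,1997CaiZ_ManteuffelT_McCormickS-aa}), so the FOSLS minimizer enjoys a C\'ea-type best-approximation estimate, and combining this with polynomial interpolation and elliptic regularity yields the claimed rate.

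First I would verify that the exact pair $(\hat u, \hat{\V})$ with $\hat{\V} := \mygrad \hat u$ satisfies
\begin{equation*}
L_\tau \begin{pmatrix} \hat u \\ \hat{\V} \end{pmatrix} = \begin{pmatrix} \tfrac{2}{\tau} f \\ \vzero \\ \vzero \end{pmatrix}
\end{equation*}
exactly: the reaction-diffusion equation in the first row holds by assumption on $\hat u$, the constitutive equation in the second row by the definition of $\hat{\V}$, and the curl equation in the third row because $\hat{\V}$ is a gradient. Consequently $(u^h,\Vh)$ minimizes $\norm{L_\tau((v,\V)^T - (\hat u, \hat{\V})^T)}^2$ over $(v,\V)\in\VVh$, and combining the minimization property with the two-sided norm equivalence between the FOSLS functional and the product $H^1$ norm yields a bound of the form
\begin{equation*}
\norm{u^h - \hat u}_{H^1} + \norm{\Vh - \hat{\V}}_{H^1} \le C_1(\tau) \inf_{(v,W)\in\VVh} \left( \norm{v - \hat u}_{H^1} + \norm{W - \hat{\V}}_{H^1} \right).
\end{equation*}

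Next I would bound the infimum by choosing $(v,W)$ to be a standard interpolant (e.g., Scott--Zhang) of $(\hat u, \hat{\V})$; Bramble--Hilbert then gives a bound of the form $C h^p \bigl( \normp{\hat u}{p+1} + \normp{\hat{\V}}{p} \bigr)$, which reduces to $C h^p \normp{\hat u}{p+1}$ because $\hat{\V} = \mygrad \hat u$. Finally, elliptic regularity for the reaction-diffusion problem $-\Delta \hat u + \tfrac{2}{\tau} \hat u = \tfrac{2}{\tau} f$ on the (sufficiently smooth) domain $\Omega$ yields $\normp{\hat u}{p+1} \le C_2(\tau) \normp{\tfrac{2}{\tau} f}{p-1}$. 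Extracting the $\tfrac{2}{\tau}$ factor from the right-hand side produces the explicit $1/\tau$ prefactor appearing in the statement, while the product $C_1(\tau) C_2(\tau)$ is absorbed into $C(\tau)$.

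The main obstacle is the careful bookkeeping of the $\tau$ dependence at each stage: the norm-equivalence constant $C_1(\tau)$ for the FOSLS functional associated with $L_\tau$ is not $\tau$-uniform, since $L_\tau$ contains the reaction coefficient $\tfrac{2}{\tau}$, and the regularity constant $C_2(\tau)$ for the perturbed Helmholtz-type operator must be tracked separately so that the explicit $1/\tau$ factor carrying the scaling of the data $\tfrac{2}{\tau} f$ is not silently absorbed into (or cancelled against) $C(\tau)$. This scaling analysis is the technical heart of the proof; everything else is standard FOSLS machinery combined with elliptic regularity.
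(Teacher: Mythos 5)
Your proposal is precisely the standard FOSLS machinery ($H^1$-ellipticity of the least-squares functional, quasi-optimality, interpolation, and elliptic regularity with explicit tracking of the $\tau$-scaling) that the paper's own proof invokes in one line by citing the reaction-diffusion FOSLS results, so it is essentially the same approach. One bookkeeping caveat: to obtain the rate $h^p$ for the $\hat{\V}=\mygrad\hat u$ component in the $H^1$ norm you need $\hat{\V}\in H^{p+1}$, i.e.\ $\hat u\in H^{p+2}$ (which is what the hypothesis $f\in H^{p}$ provides), so the natural bound carries $\normp{f}{p}$ rather than the $h^p\normp{\hat{\V}}{p}$ term you wrote -- a looseness that mirrors the paper's own statement, which is subsequently applied with $\normp{u_n}{p-1}\le\normp{u_n}{p}$ in any case.
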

\begin{proof}
For a fixed $ \tau $, the PDE is a reaction-diffusion equation.
Therefore, standard results from the FOSLS discretization of
reaction-diffusion can be used
\cite{1994CaiZ_LazarovR_ManteuffelT_McCormickS-aa,1997CaiZ_ManteuffelT_McCormickS-aa}.
Note that for a standard FOSLS approach, $C(\tau) =
\mathcal{O}\left(\frac{1}{\tau^2}\right)$, but a rescaling of the
equations may ameliorate this ``worst-case scenario.''
\end{proof}

Next, we make the following observation, which follows from the
well-posedness of the FOSLS formulation \cite{1994CaiZ_LazarovR_ManteuffelT_McCormickS-aa,1997CaiZ_ManteuffelT_McCormickS-aa}.
\begin{lemma}
  \label{rhschange}
  Let $\left( u_1, \V_1 \right)\in\VVh$ and $\left( u_2,\V_2
  \right)\in\VVh$ be two solutions to the following FOSLS weak forms
  with different right-hand sides,
  \begin{equation*}
    \left\langle L_{ \tau } 
      \begin{pmatrix}
        u_1 \\
        \V_1
      \end{pmatrix} - F_1,
      L_{ \tau } \bphih
    \right\rangle = 0,
    \quad
    \left\langle L_{ \tau } 
      \begin{pmatrix}
        u_2 \\
        \V_2
      \end{pmatrix} - F_2,
      L_{ \tau } \bphih
    \right\rangle = 0
    \quad
    \forall \bphih\in\mathcal{V}.
  \end{equation*}
  Then,
  \begin{equation*}
    \norm{u_1-u_2}_{H^1} + \norm{\V_1-\V_2}_{H^1} \le C(\tau) \norm{F_1-F_2}. 
  \end{equation*}
\end{lemma}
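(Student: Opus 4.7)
The plan is to exploit linearity and the well-posedness (coercivity) of the FOSLS functional. Since $L_\tau$ is a linear operator and the FOSLS weak form is linear in both the trial pair and the right-hand side, subtracting the two equations shows that the difference $(w, W) := (u_1 - u_2,\V_1 - \V_2) \in \VVh$ satisfies
\begin{equation*}
\left\langle L_\tau\begin{pmatrix} w \\ W \end{pmatrix} - (F_1 - F_2),\, L_\tau \bphih \right\rangle = 0 \quad \forall \bphih \in \VVh.
\end{equation*}
Thus the difference itself solves a FOSLS weak form with right-hand side $F_1 - F_2$.

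Next, I would take the admissible test function $\bphih = (w, W)$, which lies in $\VVh$, to obtain
\begin{equation*}
\left\| L_\tau \begin{pmatrix} w \\ W \end{pmatrix} \right\|^2 = \left\langle F_1 - F_2,\, L_\tau \begin{pmatrix} w \\ W \end{pmatrix} \right\rangle \le \| F_1 - F_2\| \cdot \left\| L_\tau \begin{pmatrix} w \\ W \end{pmatrix}\right\|,
\end{equation*}
by Cauchy--Schwarz, so that $\| L_\tau (w, W)^\top \| \le \|F_1 - F_2\|$.

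The final step is to invoke the FOSLS product-norm equivalence (the defining property of the $H^1$-elliptic first-order system \cite{1994CaiZ_LazarovR_ManteuffelT_McCormickS-aa,1997CaiZ_ManteuffelT_McCormickS-aa}), which for this reaction-diffusion system states that there is a constant $c(\tau)>0$ with
\begin{equation*}
\left\| L_\tau \begin{pmatrix} w \\ W \end{pmatrix} \right\|^2 \ge c(\tau) \left( \|w\|_{H^1}^2 + \|W\|_{H^1}^2 \right).
\end{equation*}
Combining these bounds yields $\|w\|_{H^1} + \|W\|_{H^1} \le C(\tau) \|F_1 - F_2\|$, as desired.

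The only real issue is the dependence of the equivalence constant on $\tau$: because the reaction term in $L_\tau$ scales like $2/\tau$, the standard FOSLS coercivity constant degrades with small $\tau$, mirroring the remark after Lemma~\ref{errorest} about the worst-case $\mathcal{O}(1/\tau^2)$ behavior. This is the main obstacle, but it is absorbed into $C(\tau)$ and is consistent with the conclusion of the previous lemma, so the proof structure itself is routine.
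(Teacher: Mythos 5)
Your argument is correct and is exactly the route the paper intends: the paper gives no written proof for this lemma, simply asserting that it ``follows from the well-posedness of the FOSLS formulation'' with citations to the Cai--Lazarov--Manteuffel--McCormick theory, and your steps (subtract by linearity, test with the difference, Cauchy--Schwarz, then the $H^1$-product-norm equivalence of the least-squares functional) are the standard spelling-out of precisely that well-posedness argument. The remark about the $\tau$-dependence of the equivalence constant is also consistent with how the paper handles $C(\tau)$.
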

This, then, yields the following result.
\begin{lemma}\label{l:accur}
Given the solution to the semi-discrete equation, \eqref{semid}, and
the fully discrete solution, we can bound the error in the $L^2$ norm:
  \begin{equation}
    \norm{\uhnn-u_{n+\frac{1}{2}}} \le \frac{C_1(\tau)}{ \tau } h^p \norm{u_n}_{H^{p}} + C_2(\tau) \norm{\uhn - u_n}.
    \label{accur}
  \end{equation}
\end{lemma}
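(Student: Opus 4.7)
The plan is to introduce an auxiliary fully discrete FOSLS solution $\uthnn$, defined analogously to $\uhnn$ but driven by the \emph{exact} semi-discrete iterate $u_n$ rather than its computed approximation $\uhn$. That is, $\left( \uthnn, \Vthnn\right) \in \VVh$ solves
\begin{equation*}
  \left\langle
    L_{\tau}
    \begin{pmatrix} \uthnn \\ \Vthnn \end{pmatrix}
    -
    \begin{pmatrix} \frac{2}{\tau} u_n \\ \vzero \\ \vzero \end{pmatrix},
    L_{\tau} \bphih
  \right\rangle = 0
  \quad \forall \bphih \in \VVh.
\end{equation*}
By the triangle inequality,
\begin{equation*}
  \norm{\uhnn - \unn} \le \norm{\uhnn - \uthnn} + \norm{\uthnn - \unn}.
\end{equation*}

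For the first term, observe that $\uhnn$ and $\uthnn$ are solutions of exactly the FOSLS weak forms considered in Lemma \ref{rhschange}, with right-hand sides $F_1 = \bigl(\frac{2}{\tau} \uhn, \vzero, \vzero\bigr)^T$ and $F_2 = \bigl(\frac{2}{\tau} u_n, \vzero, \vzero\bigr)^T$. Lemma \ref{rhschange} therefore gives
\begin{equation*}
  \norm{\uhnn - \uthnn} \le \norm{\uhnn - \uthnn}_{H^1} \le C(\tau)\,\tfrac{2}{\tau}\norm{\uhn - u_n},
\end{equation*}
which I would absorb into a single constant $C_2(\tau)$ capturing the $\tau$ dependence.

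For the second term, notice that $\unn$ is precisely the exact solution of the reaction-diffusion PDE from Lemma \ref{errorest} with data $f = u_n$, and $\uthnn$ is the corresponding FOSLS discrete solution. Applying Lemma \ref{errorest} directly yields
\begin{equation*}
  \norm{\uthnn - \unn} \le \norm{\uthnn - \unn}_{H^1} \le \frac{C(\tau)\,h^p}{\tau}\norm{u_n}_{H^{p-1}} \le \frac{C_1(\tau)}{\tau} h^p \norm{u_n}_{H^p}.
\end{equation*}
Here the regularity hypothesis $u_n \in H^p \cap H^1_0$ required by Lemma \ref{errorest} is guaranteed by the preceding regularity lemma together with Assumption \ref{as:initcond} on the initial data. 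Combining the two bounds gives exactly \eqref{accur}.

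No step involves a genuine obstacle; the work is structural rather than technical. The main subtlety is recognizing that the right splitting is ``same discretization, different data'' versus ``same data, discretization error,'' so that Lemmas \ref{rhschange} and \ref{errorest} apply to the two pieces respectively. I would also take care to keep the $\tau$ dependence transparent, since the constants $C_1(\tau)$ and $C_2(\tau)$ inherit the (potentially unfavorable) scaling from Lemma \ref{errorest}, and this will matter when summing the bound over $n$ time steps in the subsequent convergence analysis of the energy law \eqref{FOSLSenergylaw}.
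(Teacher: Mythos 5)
Your proposal is correct and follows essentially the same route as the paper's proof: the same auxiliary FOSLS solution $\uthnn$ driven by the exact semi-discrete data $u_n$, the same triangle-inequality splitting, and the same applications of Lemma~\ref{rhschange} (change of right-hand side) and Lemma~\ref{errorest} (discretization error) to the two pieces. The only cosmetic difference is that you track the $\frac{2}{\tau}$ factor from $F_1-F_2$ explicitly before absorbing it into $C_2(\tau)$, which the paper does implicitly.
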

\begin{proof}
  Let $\uthnn$ be the scalar part of the FOSLS solution
  $\left( \uthnn, \Vthnn \right)$ of
  \begin{equation}
    -\Delta u + \frac{2}{ \tau } u = \frac{2}{ \tau } u_n,
    \text{ in } \Omega,
    \quad
      u = 0 \text{ on } \partial \Omega,
    \label{evobvp}
  \end{equation}
  where the exact semi-discrete solution $u_n$, at the previous time step, is
  used in the right-hand side.  By the triangle inequality,
  \begin{equation}
    \norm{\uhnn-\unn} \le \norm{\uhnn-\uthnn} + \norm{ \uthnn - \unn }.
    \label{accurtri}
  \end{equation}
  By Lemma~\ref{rhschange}, we have
  \begin{equation}
  \renewcommand{\arraystretch}{2}
    \begin{array}{rcl}
    \norm{\uhnn-\uthnn} & \le & 
    \norm{\uhnn-\uthnn}_{H^1}+
    \norm{\Vhnn-\Vthnn}_{H^1}\\
      & \le & C_2(\tau)\norm{\uhn - u_n}.
    \end{array}
    \label{accurfirst}
  \renewcommand{\arraystretch}{1}
  \end{equation}
% By Poincar\'e's
  % inequality the same estimate holds if we replace the left hand side
  % with $\norm{\uhnn-\uthnn}_{L^2}$.
  The functions $\uthnn$ and $\unn$
  are, respectively, FOSLS and exact solutions of the same boundary
  value problem \eqref{evobvp}. Hence, from Lemma~\ref{errorest}, we have
  \begin{equation}
    \norm{ \uthnn - \unn } \le \frac{C(\tau)}{ \tau } h^{p}
    \norm{u_n}_{H^{p-1}} \leq \frac{C(\tau)}{ \tau } h^{p}
    \norm{u_n}_{H^{p}}
    \label{accursec}
  \end{equation}
Combining \eqref{accurtri}, \eqref{accurfirst} and \eqref{accursec}, we obtain \eqref{accur}.
\end{proof}

Finally, we have the following result on the approximation of the
exact energy law~\eqref{FOSLSenergylaw}.
\begin{theorem}\label{t:elaw}
Let $\left ( \begin{array}{c}u^h_{n}\\\vec{V}^h_n\\\end{array}\right
)$ be the solution to the FOSLS system, \eqref{FOSLSheat}, at time step
$n$ (with $u^h_{n+\frac{1}{2}}$ and $V^h_{n+\frac{1}{2}}$ defined as
before).  There exists $C(\tau)>0$ such that
\[ \left | \frac{\|u^h_{n+1}\|^2 - \|u^h_n\|^2}{2 \tau } +
  \|\vec{V}^h_{n+\frac{1}{2}}\|^2\right | \leq C(\tau) \frac{2}{\tau}\left\|u_n^h-u_n\right\|\min\limits_{ \bphih \in \VVh }
  {
    \left\|
      \begin{pmatrix}
        \uhnn \\
        \Vhnn \\
        \vzero
      \end{pmatrix}
      -L_{\tau} \bphih
    \right\|
  }.
\]
\end{theorem}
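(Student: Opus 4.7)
The plan is to reduce the left-hand side of Theorem~\ref{t:elaw} to the $L^2$ pairing of the FOSLS residual with one explicit test vector, and then use FOSLS Galerkin orthogonality to produce the best-approximation factor on the right.

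First I would rewrite the energy defect algebraically. Using $\uhnnn = 2\uhnn - \uhn$ gives $\|u^h_{n+1}\|^2 - \|u^h_n\|^2 = 4\langle \uhnn - \uhn,\uhnn\rangle$, so the quantity to be bounded equals $\frac{2}{\tau}\langle \uhnn-\uhn,\uhnn\rangle + \|\Vhnn\|^2$. Introduce the FOSLS residual and the test vector
\[
  r := L_{\tau}\begin{pmatrix}\uhnn\\\Vhnn\end{pmatrix} - \begin{pmatrix}\frac{2}{\tau}\uhn\\\vzero\\\vzero\end{pmatrix},\qquad W := \begin{pmatrix}\uhnn\\\Vhnn\\\vzero\end{pmatrix}.
\]
A direct calculation, integrating $\langle \nabla\cdot\Vhnn,\uhnn\rangle$ by parts (the boundary term vanishes since $\uhnn = 0$ on $\partial\Omega$) and noting that the curl row of $r$ is paired against the zero entry of $W$, identifies the energy defect with $\langle r, W\rangle$.

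Next, the FOSLS weak form~\eqref{FOSLSheat} is exactly $\langle r, L_{\tau}\bphih\rangle = 0$ for every $\bphih\in\VVh$. Subtracting an arbitrary $L_{\tau}\bphih$ inside the inner product, then applying Cauchy--Schwarz and minimizing over $\bphih$, yields
\[
  \left|\langle r, W\rangle\right| \;\le\; \|r\|\,\min_{\bphih\in\VVh}\left\|W - L_{\tau}\bphih\right\|,
\]
which already isolates the best-approximation factor of the claim.

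The remaining, and delicate, step is to control $\|r\|$ by $C(\tau)\,\frac{2}{\tau}\,\|\uhn - u_n\|$. My plan is to compare $r$ with the residual $\bar r$ of the parallel FOSLS problem whose data is the exact semi-discrete value $\frac{2}{\tau}u_n$ instead of $\frac{2}{\tau}\uhn$. The two right-hand sides differ only by $\frac{2}{\tau}(\uhn - u_n)$ in the first slot, so Lemma~\ref{rhschange} applied to the difference of the two FOSLS solutions, together with continuity of $L_{\tau}$ on the product $H^1$ space, bounds $\|r - \bar r\|$ by a $\tau$-dependent multiple of $\frac{2}{\tau}\|\uhn - u_n\|$. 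The main obstacle is that the comparison residual $\|\bar r\|$ is itself only the $O(h^p)$ FOSLS discretization residual and is not a priori a multiple of $\|\uhn-u_n\|$; absorbing it into the claimed bound forces one to combine the FOSLS error estimate (Lemma~\ref{errorest}) for the reaction--diffusion problem with the recursive estimate of Lemma~\ref{l:accur}, whose regularity input comes from Assumption~\ref{as:initcond}. That absorption is where the $\tau$-dependence of $C(\tau)$ is picked up, and is the most delicate part of the argument.
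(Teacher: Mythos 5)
Your opening steps do coincide with the paper's proof: the identity that the energy defect equals $\left\langle r, W\right\rangle$ (via $\uhnnn=2\uhnn-\uhn$ and integration by parts), and the use of the orthogonality \eqref{FOSLSheat} to replace $W$ by $W-L_{\tau}\bphih$, which isolates the best-approximation factor $\min_{\bphih\in\VVh}\|W-L_{\tau}\bphih\|$. The gap is in how you handle the residual factor. By applying Cauchy--Schwarz immediately you commit to bounding $\|r\|$, with $r=L_{\tau}\left(\uhnn,\Vhnn\right)^{T}-\frac{2}{\tau}\left(\uhn,\vzero,\vzero\right)^{T}$, by $C(\tau)\frac{2}{\tau}\|\uhn-u_n\|$, and your plan for doing so---comparing $r$ with the residual $\bar r$ of a \emph{second discrete} FOSLS solve whose data is $\frac{2}{\tau}u_n$---cannot close. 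The quantity $\|\bar r\|$ is the least-squares functional value of a discrete minimizer: it is an $\mathcal O(h^p)$ quantity with no relation to $\|\uhn-u_n\|$ (it does not vanish even when $\uhn=u_n$), and ``absorbing'' it through Lemma~\ref{errorest} and Lemma~\ref{l:accur} necessarily injects $h^p\|u_n\|_{H^p}$-type terms into the right-hand side. That route produces an estimate of the flavor of Corollary~\ref{cor:badbound}, not the statement of Theorem~\ref{t:elaw}, whose bound contains only $\frac{2}{\tau}\|u_n^h-u_n\|$ times the best-approximation factor.

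The paper's proof avoids this in two ways. First, it splits \emph{inside} the inner product before any Cauchy--Schwarz, adding and subtracting $\frac{2}{\tau}\left(u_n^h-u_n,\vzero,\vzero\right)^{T}$: the second piece yields $\frac{2}{\tau}\|u_n^h-u_n\|$ times the best-approximation factor directly. Second---and this is the idea missing from your proposal---the remaining piece is the residual of the \emph{same} discrete pair $(\uhnn,\Vhnn)$ measured against the exact semi-discrete datum $\frac{2}{\tau}u_n$, and it is compared with the \emph{exact} semi-discrete solution $(\unn,\Vnn)$, not with another discrete solve: since $L_{\tau}\left(\unn,\Vnn\right)^{T}=\frac{2}{\tau}\left(u_n,\vzero,\vzero\right)^{T}$ holds identically, the data term cancels exactly, leaving only $L_{\tau}$ applied to the difference $(\uhnn-\unn,\Vhnn-\Vnn)$; continuity of $L_{\tau}$ together with the well-posedness bound of Lemma~\ref{rhschange} then controls this by $C(\tau)\frac{2}{\tau}\|u_n^h-u_n\|$, with no $h^p$ remainder and no appeal to Lemma~\ref{errorest}, Lemma~\ref{l:accur}, or Assumption~\ref{as:initcond}. (Your early Cauchy--Schwarz is not by itself fatal---one could still bound $\|r\|\le\|L_{\tau}(\uhnn-\unn,\Vhnn-\Vnn)^{T}\|+\frac{2}{\tau}\|\uhn-u_n\|$ using the same exact cancellation---but without that cancellation against the exact semi-discrete solution, whose first-order residual is zero, your argument as written cannot reach the claimed bound.)
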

\begin{proof}
  To simplify the notation, define the energy law we wish to bound
  as,
  \[E^h_n := \frac{\|u^h_{n+1}\|^2 - \|u^h_n\|^2}{2 \tau } +
  \|\vec{V}^h_{n+\frac{1}{2}}\|^2.\]
Note that
\begin{equation*}
  \ha\frac{\norm{u^h_{n+1}}^2-\norm{u^h_{n}}^2}{ \tau } = \left\langle\frac{\uhnnn-\uhn}{ \tau }, \uhnn \right\rangle = \left\langle
    \frac{\uhnn-\uhn}{\frac{ \tau }{2}}, \uhnn \right\rangle,
\end{equation*}
and
\begin{equation*}
\norm{\Vhnn}^2 = \left\langle-\nabla\cdot\Vhnn,\uhnn\right\rangle + \left\langle\Vhnn-\nabla\uhnn,\Vhnn\right\rangle,
\end{equation*}
where the latter equation is obtained by integration by parts,
continuity of the spaces, and appropriate boundary conditions.%  and the fact that
% $\Vhnn = \nabla\uhnn$.
% Note that the  quantity \eqref{energyerror} is zero for the exact solution of \eqref{semidsys}. This 
% can be seen by multiplying the first equation of \eqref{semidsys} by $\frac{u_{n+1}+u_n}{2}$ and
% integrating over $\Omega$.
~~Thus,
\begin{multline*}
  E^h_n
  = \left\langle-\nabla\cdot\Vhnn+ \frac{\uhnn-\uhn}{\frac{ \tau }{2}}, \uhnn \right\rangle + 
   \left\langle\Vhnn - \nabla\uhnn, \Vhnn \right\rangle \\
  % =
  % \begin{pmatrix}
  %   \uth - \nabla\cdot\Vh \\
  %   \Vh \\ 
  %   \nabla\times\Vh
  % \end{pmatrix}
  % \cdot
  % \begin{pmatrix}
  %   u^h \\
  %   \Vh \\
  %   \vzero
  % \end{pmatrix}
  =
  \left\langle 
    L_{ \tau } 
    \begin{pmatrix}
      \uhnn \\
      \Vhnn
    \end{pmatrix}
    -
    \begin{pmatrix}
      \frac{2}{ \tau } \uhn \\
      \vzero\\
      \vzero\\
    \end{pmatrix},
    \begin{pmatrix}
      \uhnn \\
      \Vhnn \\
      \vzero
    \end{pmatrix}
  \right\rangle.
\end{multline*}
% To simplify notation, we denote
% $
%   \Rhn = 
%   L_{ \tau } 
%   \begin{pmatrix}
%     \uhnn \\
%     \Vhnn
%   \end{pmatrix}
%   -
%   \begin{pmatrix}
%     \frac{2}{ \tau } \uhn \\
%     \vzero\\
%     \vzero\\
%   \end{pmatrix}.$
% Then, u
Using \eqref{FOSLSheat}, for any $\bphih \in \mathcal{V}^h$,
\begin{equation*}
  E^h_n = \left\langle 
    L_{ \tau } 
    \begin{pmatrix}
      \uhnn \\
      \Vhnn
    \end{pmatrix}
    -
    \begin{pmatrix}
      \frac{2}{ \tau } \uhn \\
      \vzero\\
      \vzero\\
    \end{pmatrix},
    \begin{pmatrix}
      \uhnn \\
      \Vhnn \\
      \vzero
    \end{pmatrix}
    -
    L_{ \tau } \bphih
  \right\rangle.
  \label{aux1}
\end{equation*}

Next, consider adding and subtracting the solutions to the
semi-discrete, \eqref{FOSLSsystem}, and fully discrete,
\eqref{FOSLSheat}, FOSLS system from the
previous time step,
\begin{multline*}
  E^h_n = \left\langle
    L_{ \tau } 
    \begin{pmatrix}
      \uhnn \\
      \Vhnn
    \end{pmatrix}
    -
    \begin{pmatrix}
      \frac{2}{ \tau } \uhn \\
      \vzero\\
      \vzero\\
    \end{pmatrix} + \frac{2}{\tau}\begin{pmatrix}
      u_n^h - u_n\\
      \vzero\\
      \vzero
    \end{pmatrix},
    \begin{pmatrix}
      \uhnn \\
      \Vhnn \\
      \vzero
    \end{pmatrix}
    -
    L_{\tau} \bphih  
  \right\rangle \\
  - \frac{2}{\tau}\left\langle \begin{pmatrix}
      u_n^h - u_n\\
      \vzero\\
      \vzero
    \end{pmatrix},
    \begin{pmatrix}
      \uhnn \\
      \Vhnn \\
      \vzero
    \end{pmatrix}
    -
    L_{\tau} \bphih  
  \right\rangle\\
  =
  \left\langle
    L_{\tau}\begin{pmatrix}
      \uhnn\\
      \Vhnn\end{pmatrix} - \frac{2}{\tau}\begin{pmatrix}
      u_n\\
      \vzero\\
      \vzero
    \end{pmatrix},
    \begin{pmatrix}
      \uhnn \\
      \Vhnn \\
      \vzero
    \end{pmatrix}
    -
    L_{\tau} \bphih  
  \right\rangle - \frac{2}{\tau}\left\langle \begin{pmatrix}
      u_n^h - u_n\\
      \vzero\\
      \vzero
    \end{pmatrix},
    \begin{pmatrix}
      \uhnn \\
      \Vhnn \\
      \vzero
    \end{pmatrix}
    -
    L_{\tau} \bphih  
  \right\rangle\\
  \leq
  \left\|L_{\tau}\begin{pmatrix}
      \uhnn\\
      \Vhnn\end{pmatrix} - \frac{2}{\tau}\begin{pmatrix}
      u_n\\
      \vzero\\
      \vzero
    \end{pmatrix}\right\| M_n^h + \frac{2}{\tau}M_n^h\left\|u_n^h-u_n\right\|,
\end{multline*}
where we have defined $M_n^h := \min\limits_{ \bphih \in \VVh }
  {
    \left\|
      \begin{pmatrix}
        \uhnn \\
        \Vhnn \\
        \vzero
      \end{pmatrix}
      -L_{\tau} \bphih
    \right\|
  }$.  Then, adding and subtracting $L_{\tau}\begin{pmatrix}
      \unn\\
      \Vnn\end{pmatrix}$ yields
\begin{multline*}
 E_n^h \leq \left\|L_{\tau}\begin{pmatrix}
      \uhnn-\unn\\
      \Vhnn-\Vnn\end{pmatrix} + \cancelto{0}{L_{\tau}\begin{pmatrix}
      \unn\\
      \Vnn\end{pmatrix} - \frac{2}{\tau}\begin{pmatrix}
      u_n\\
      \vzero\\
      \vzero
    \end{pmatrix}}\right\| M_n^h + \frac{2}{\tau}M_n^h\left\|u_n^h-u_n\right\|.
\end{multline*}
Using the continuity of $L_{\tau}$, followed by Lemma
\ref{rhschange}, gives
\begin{multline*}
  |E_n^h| 
  \leq
  C(\tau) M_n^h\left\|\begin{pmatrix}
      \uhnn-\unn\\
      \Vhnn-\Vnn\end{pmatrix} \right\|_{H^1} +
  \frac{2}{\tau}M_n^h\left\|u_n^h-u_n\right\|\\
  \leq
  C(\tau)\frac{2}{\tau}M_n^h\left\|u_n^h-u_n\right\|+
  \frac{2}{\tau}M_n^h\left\|u_n^h-u_n\right\|.
\end{multline*}
Combining the two terms completes the proof.

% Applying the Cauchy-Schwarz inequality then gives

% \begin{multline}
%   \left| \ha\frac{\norm{u^h_{n+1}}^2-\norm{u^h_{n}}^2}{ \tau } + \norm{\Vhnn}^2
%   \right| \le 
%   \left\|
%     \Rhn  \right\|\;
%   \min_{ \bphih \in \VVh }
%   {
%     \left\|
%       \begin{pmatrix}
%         \uhnn \\
%         \Vhnn \\
%         \vzero
%       \end{pmatrix}
%       -L \bphih
%     \right\|
%   }.
%   \label{energy_law_bound_one}
% \end{multline}
% From the definition of $\delta$ the result easily follows. 
\end{proof}
To provide a better bound for the FOSLS energy law~\eqref{FOSLSenergylaw}, we
introduce a measure for the truncation error defined as
\begin{equation}\label{e:truncation}
  \delta_n = \max_{v\in H^{p+1}(\Omega)}
  \frac{1}{\|v\|_{H^{p+1}}}\min_{ \bphih \in \VVh }
  {
    \left\|
      \begin{pmatrix}
        \uhnn(v) \\
        \Vhnn(v) \\
        \vzero
      \end{pmatrix}
      -\mathcal{L} \bphih
    \right\|
  },
\end{equation}
where $\uhnn(v)$ and $\Vhnn(v)$ are the corresponding solutions to the
fully discrete problem with $u_0 = v$ as the initial condition.
\begin{corollary}
\label{cor:badbound}
  Using the same assumptions as Theorem \ref{t:elaw} and Assumption
  \ref{as:initcond},
  \begin{equation}
    \left |\frac{\|u^h_{n+1}\|^2 - \|u^h_n\|^2}{2 \tau } +
      \|\vec{V}^h_{n+\frac{1}{2}}\|^2\right | \leq \frac{C(\tau)\delta}{\tau} h^p\|u_0\|_{H^{p+1}}^2, \quad \delta = \max_n\delta_n. 
    \end{equation}
  \end{corollary}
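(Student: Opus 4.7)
The plan is to reduce the corollary to the estimate proved in Theorem~\ref{t:elaw} and then to bound each of its two factors separately. The theorem gives
\[
|E_n^h| \le \frac{2\,C(\tau)}{\tau}\,\|\uhn-u_n\|\, M_n^h,
\]
where $M_n^h$ denotes the minimum-norm quantity appearing in the statement. It therefore suffices to show that both $M_n^h$ and $\|\uhn-u_n\|$ are bounded by a constant times $h^p\,\|u_0\|_{H^{p+1}}$.

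For the truncation factor $M_n^h$, my approach is to invoke the definition~\eqref{e:truncation} of $\delta_n$ with the specific choice $v = u_0$. Since the fully-discrete state $(\uhnn,\Vhnn)$ coincides with $(\uhnn(u_0),\Vhnn(u_0))$, this yields immediately
\[
M_n^h \le \delta_n\,\|u_0\|_{H^{p+1}} \le \delta\,\|u_0\|_{H^{p+1}}.
\]

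The more substantial step is to control the accumulated time-stepping error $e_n := \|\uhn-u_n\|$. Here the plan is to combine Lemma~\ref{l:accur},
\[
\|\uhnn-\unn\| \le \frac{C_1(\tau)}{\tau}\,h^p\,\|u_n\|_{H^p} + C_2(\tau)\,e_n,
\]
with the update identity $\uhnnn - u_{n+1} = 2(\uhnn-\unn) - (\uhn - u_n)$ to derive a linear recurrence of the form $e_{n+1}\le A(\tau)\,e_n + B(\tau)\,h^p\,\|u_n\|_{H^p}$. The regularity lemma bounds $\|u_n\|_{H^p}$ by a multiple of $\|u_0\|_{H^p}\le\|u_0\|_{H^{p+1}}$ uniformly in $n$, and Assumption~\ref{as:initcond} supplies the base case $e_0 \le C h^p\|u_0\|_{H^{p+1}}$. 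Iterating the recurrence then yields $e_n \le C(\tau)\,h^p\,\|u_0\|_{H^{p+1}}$, with the constant absorbing any geometric growth in $n$. Plugging this bound and the estimate for $M_n^h$ into the theorem produces the asserted inequality.

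The principal obstacle is exactly this inductive stability argument: since $C_2(\tau)$ need not be at most~$1$, the constant that one obtains on $e_n$ may grow exponentially in the number of time steps, so the resulting $C(\tau)$ is both $\tau$- and horizon-dependent. This is consistent with the name \emph{badbound}: the corollary only captures an $O(h^p)$ spatial rate, whereas the $O(h^{2p})$ super-convergence suggested by the numerical experiments in the abstract would demand either a sharper FOSLS stability estimate or a direct analysis that avoids bounding $\|\uhn-u_n\|$ through the triangle inequality used here.
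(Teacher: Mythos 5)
Your proposal is correct and mirrors the paper's own argument: the same recurrence for $\|u_n^h-u_n\|$ obtained from the update identity and Lemma~\ref{l:accur}, the base case from Assumption~\ref{as:initcond}, semi-discrete regularity to control $\|u_j\|_{H^p}$ by $\|u_0\|_{H^{p+1}}$, and the choice $v=u_0$ in~\eqref{e:truncation} to bound the minimization term by $\delta\|u_0\|_{H^{p+1}}$ before inserting everything into Theorem~\ref{t:elaw}. Your closing caveat about the $n$- and $\tau$-dependence of the constant matches the paper's own remark that the bound is pessimistic.
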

  \begin{proof}
Using the definitions of $\uhnn$ and $\unn$, the triangle inequality,
and Lemma~\ref{l:accur},
\begin{multline*}
  \| u^h_{n+1} - u_{n+1}\| \leq 2\|\uhnn - \unn\| + \|u_n^h - u_n\|\\
  \leq \frac{2}{\tau}C_1(\tau) h^p\|u_{n}\|_{H^p} + \left (C_2(\tau) + 1\right
  )\|u_n^h - u_n\|.
\end{multline*}
An induction argument then gives
\[ \| u_n^h - u_n \| \leq \frac{2}{\tau}C_1(\tau) h^p\sum_{j=1}^n \left
    (C_2(\tau)+1\right )^{j-1}\|u_{n-j}\|_{H^p} + \left (C_2(\tau)+1\right
  )^n\|u_0^h - u_0\|.\]
With Assumption \ref{as:initcond},
\[ \| u_n^h - u_n \| \leq \frac{2}{\tau}C_1(\tau) h^p\sum_{j=1}^n \left
    (C_2(\tau)+1\right )^{j-1}\|u_{n-j}\|_{H^p} + \left (C_2(\tau)+1\right
  )^n\|u_0\|_{H^{p+1}}.\]
Using some regularity arguments for each $u_i$, we get,
\[\|u_n^h - u_n\| \leq C(n) h^p\|u_0\|_{H^{p+1}}.\]
Then, with the definition of $\delta$ and the result from Theorem
\ref{t:elaw}, the proof is complete.
  \end{proof}

We note that the bound in Corollary~\ref{cor:badbound} is a rather
pessimistic one.  At a fixed time, $t$, we expect the quality of both
the fully discrete and semi-discrete approximations to the true
solution to improve as $\tau \rightarrow 0$ and more time-steps are
used to reach time $t$; thus, $\|u_n^h-u_n\|$ should decrease as
$\tau\rightarrow 0$ for $n = t/\tau$.  Furthermore, for the unforced
heat equation, we expect both $u_n^h$ and $u_n$ to decrease in
magnitude with $n$, but this is not accounted for in the bound in
Corollary~\ref{cor:badbound}.  The bound above worsens
with smaller $\tau$ and bigger $n$, showing the limitations of bounding
$\|u_n^h-u_n\|$ by terms depending only on $u_0$ and the finite-element
space.

\begin{remark}
  As shown in the numerical experiments, Section \ref{s:numerics},
  the constant $\delta$ defined in~\eqref{e:truncation} is of order
  $h^p$ for a smooth solution.  This indicates that the energy
  law~\eqref{FOSLSenergylaw} holds with order
  $\mathcal O\left(h^{2p}\right)$. While the theoretical justification
  of such statement may be plausible, it is non trivial as the
  discrete quantities involved in the definition of $\delta$ do not
  possess enough regularity (they are just finite-element functions,
  only in $H^1$).
  \end{remark}
% {\color{red}
%   \hrulefill 

% Where is this statement below used????
  
% \begin{stm}
%   Assuming $u_0\in H^p$, for any $n=0,1,2,\ldots$ we have
%   \begin{equation*}
%     \norm{u_n - \uhn}_{H_1} \le C h^p, \quad \norm{u_{n+\frac{1}{2}} - \uhnn}_{H_1} \le C h^p,
%   \end{equation*}
%   where constant $C$ is independent of $ \tau $ and $n$.
% \end{stm}
% \begin{proof}
%   To be written.
% \end{proof}

%   \hrulefill 
% }

\section{Stokes' Equations}\label{sec:stokes}
Next, we return to the time-dependent
Stokes' equations, \eqref{eqn:stokes1}-\eqref{eqn:stokes2}.  For simplicity, we again
assume $\nu=1$, and rewrite the equations using Dirichlet boundary
conditions for the normal components of the velocity field, and zero-mean average for the
pressure field,
\begin{align}\label{eq:stokessys}
  \frac{\partial \vu(\vx,t)}{\partial t} - \Delta \vu(\vx,t) + \nabla p(\vx,t)&= \vec{0}\quad
  \forall\vx\in\Omega, \; \forall t > 0\\
\dvg\vu(\vx,t) & = 0 \quad
  \forall\vx\in\Omega, \; \forall t > 0\\
 \vec{n}\cdot\vu(\vx,t) &= 0 \quad \forall \vx\in\partial\Omega, \; \forall t \ge 0 \\
  \vu(\vx,0) &= \vec{g}(\vx) \quad \forall\vx\in\bar{\Omega}, \\
\int_{\Omega} p(\vx,t) \dV &= 0\quad \forall t \ge 0.
\end{align}

Using a similar semi-discretization in time with Crank-Nicolson that
was done in \eqref{semid} yields,
\begin{gather}
  \begin{aligned}
    \frac{\vu_{n+\frac{1}{2}}-\vu_n}{\left( \frac{ \tau }{2} \right)}
    - \Delta \vu_{n+\frac{1}{2}} + \nabla p_{n+\frac{1}{2}}&= \vec{0},
    \\
\dvg\vu_{n+\frac{1}{2}} &=0,\\
    \vec{n}\cdot\vu_{n+\frac{1}{2}}(\vx) &= 0
    \quad\forall\vx\in\partial\Omega,% \;n=0,1,2,\ldots 
    \\
\int_{\Omega} p_{n+\frac{1}{2}} \dV &= 0\quad \forall n \ge 0,\\
    \vu_{n+1} &= 2\vu_{n+\frac{1}{2}} - \vu_n,\\
p_{n+\frac{1}{2}} &=2p_{n+\frac{1}{2}} - p_n.\\
  \end{aligned}
  \label{stokes_semid}
\end{gather}

To use the FOSLS method, we put the operator into a first-order
system in a similar fashion to the heat equation.  Least-squares
formulations are well-studied for Stokes' system and we consider a
simple, velocity-gradient-pressure formulation, where a new gradient tensor,
$\vec{V} = \mygrad \vu$, is used to obtain an
$H^1$-elliptic equivalent system \cite{1998BochevP_CaiZ_ManteuffelT_McCormickS-aa,1999BochevP_ManteuffelT_McCormickS-aa,2007HeysJ_LeeE_ManteuffelT_McCormickS-aa}:
\begin{equation}\label{FOSLSstokes}
  L_{ \tau } 
  \begin{pmatrix}
    \vunn \\
    \Vnn\\
     \pnn
  \end{pmatrix}
  =
  \begin{pmatrix}
    -\nabla\cdot\Vnn + \nabla\pnn +  \frac{2}{ \tau }\unn \\
    \nabla\cdot\vunn \\
    \Vnn-\nabla \vunn \\
    \nabla\times\Vnn \\
    \nabla\textbf{tr}\Vnn
  \end{pmatrix}
=
  \begin{pmatrix}
    \frac{2}{ \tau } \vu_{n} \\
   0 \\
    \vec{0}\\
\vec{0}\\
\vec{0}
  \end{pmatrix}.
\end{equation}

Appropriate boundary condition on the continuous solution, such as
$\vec{n}\cdot\vu=0$, gives rise to tangential boundary conditions on $\vec{V}$,
$\vec{V}\times\vec{n} = \vec{0}$, where $\vec{n}$ is the normal vector
to the boundary. Ultimately, the corresponding semi-discrete energy
law is
\begin{equation}\label{FOSLSenergylawstokes}
\frac{||\vu_{n+1}||^2 - ||\vu_n||^2}{2 \tau } = -||\vec{V}_{n+\frac{1}{2}}||^2.
\end{equation}

Finally, we minimize the residual of \eqref{FOSLSstokes} over a
finite-dimensional subspace of the product $H^1$ Sobolev space in the
$L^2$ norm obtaining the weak equations,
\begin{equation}\label{FOSLSstokesweak}
  \left\langle
    L_{ \tau } 
    \begin{pmatrix}
      \vu^h_{n+\frac{1}{2}} \\
      \Vhnn\\
      p^h_{n+\frac{1}{2}}
    \end{pmatrix}
    -
    \begin{pmatrix}
      \frac{2}{ \tau } \vu^h_n \\
      0 \\
    \vec{0}\\
\vec{0}\\
\vec{0}
    \end{pmatrix},
    L_{ \tau } \bphih
  \right\rangle = 0 \quad \forall \bphih\in \VVh.
\end{equation}
Note, that the weak system is similar to \eqref{FOSLSheat} and the
energy law is identical to \eqref{FOSLSenergylaw} in vector form.
Thus, all the above theory still holds subject to enough regularity of
the solution to the time-dependent Stokes' equations
\cite{VASolonnikov_1964,VASolonnikov_1965} and a suitable
generalization of the definition of $\delta$.

\section{Numerical Experiments}\label{s:numerics}
For the numerical results presented here, we use a C++ implementation
of the FOSLS algorithm, using the modular finite-element library MFEM
\cite{mfem-aa} for managing the discretization, mesh, and
timestepping.  The linear systems are solved by direct method using
the UMFPACK package \cite{2004DavisT-aa}.

\subsection{Heat Equation}
First, we consider the heat equation, \eqref{eq:heat}, and its
discrete FOSLS formulation, \eqref{FOSLSheat}, on a triangulation of
$\Omega = (0,1)\times(0,1)$.  The data is chosen so that the true
solution is $u(x,y,t) = \sin(\pi x)\sin(\pi y)e^{-2\pi^2t}$.  Note that this
solution satisfies the boundary conditions and other assumptions
discussed above.

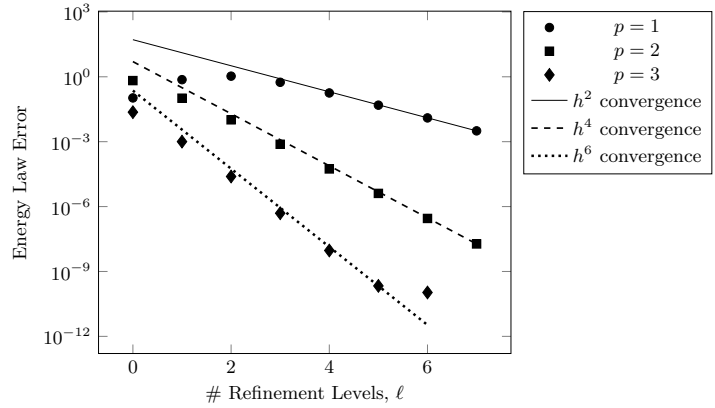
\begin{figure}[h!]
  \centering
  \scalebox{0.8}{
    \begin{tikzpicture}
    \begin{semilogyaxis}[
	xlabel={ \# Refinement Levels, $\ell$ },
	ylabel={Energy Law Error},
        legend pos=outer north east,
        legend style={cells={align=left}}]

        \addplot+[only marks,black,mark=*,mark options={fill=black}] coordinates{
		(0,1.054694e-01)
		(1,7.356504e-01)
		(2,1.070110e+00)
		(3,5.565853e-01)
		(4,1.787960e-01)
		(5,4.876466e-02)
		(6,1.254963e-02)
		(7,3.164519e-03)
      };
      \addlegendentry{$p=1$}
      
        \addplot+[only marks,black,mark=square*,mark options={fill=black}] coordinates{
		(0, 6.667144e-01)
		(1, 1.022337e-01)
		(2, 1.025251e-02)
		(3, 7.599296e-04)
		(4, 5.495789e-05)
		(5, 4.065416e-06)
		(6, 2.824064e-07)
		(7, 1.872939e-08)
      };
      \addlegendentry{$p=2$}

      \addplot+[only marks,black,mark=diamond*,mark options={scale=1.5,fill=black}] coordinates{
		(0,2.314188e-02)
		(1,1.004929e-03)
		(2,2.449755e-05)
		(3,4.922769e-07)
		(4,9.324679e-09)
		(5,2.148797e-10)
		(6,1.066383e-10)
      };
      \addlegendentry{$p=3$}

      \addplot+[ domain=0:7, no markers, solid, black ]
         { 2^(-2*(x-7))*3.164519e-03 };
      \addlegendentry {$h^2$ convergence}

      \addplot+[ domain=0:7, no markers, dashed, black, thick ]
         { 2^(-4*(x-7))*1.872939e-08 };
      \addlegendentry {$h^4$ convergence}

      \addplot+[ domain=0:6, no markers, dotted, black, very thick ]
         { 2^(-6*(x-5))*2.148797e-10 };
      \addlegendentry {$h^6$ convergence}

    \end{semilogyaxis}
  \end{tikzpicture}
  }
  \caption{Energy law error, \eqref{FOSLSenergylaw},  vs. number of
    mesh refinements, $\ell$ ($h=\frac{1}{2^{\ell}}$), for the
    FOSLS discretization of the
    heat equation, \eqref{FOSLSheat}, using various orders of the
    finite-element space ($p=1$ - linear; $p=2$ - quadratic; and $p=3$ -
    cubic). One time step is performed with
    $\tau = 0.005$.}
  \label{fig:heatvsh}
\end{figure}

Figure \ref{fig:heatvsh} displays the convergence of the
energy law to zero as the mesh is refined for a fixed time step.  The
convergence is $\mathcal O\left(h^{2p}\right)$, where $p$ is the order of the finite-element space being
considered, confirming Theorem 1.  It also suggests that the constant
$\delta$ is $\mathcal O\left(h^{p}\right)$, as is remarked above.

\begin{figure}[h!]
  \begin{subfigure}[b]{0.48\textwidth}
    \centering
  \resizebox{\linewidth}{!}{
    \begin{tikzpicture}
    \begin{semilogyaxis}[
	xlabel={ \# Time Steps, $n$ },
	ylabel={Energy Law Error},
        legend pos=north east,
        legend style={cells={align=left}}]

        \addplot+[only marks,black,mark=*,mark options={fill=black}] coordinates{
			(1,4.876466e-02)
			(2,4.039555e-02)
			(4,2.757456e-02)
			(8,1.271522e-02)
			(16,2.673262e-03)
			(32,1.174677e-04)
			(64,2.265981e-07)
      };
      %\addlegendentry{$p=1$}

       \addplot+[only marks,black,mark=square*,mark options={fill=black}] coordinates{
			(1,4.065416e-06)
			(2,4.112083e-06)
			(4,2.670556e-06)
			(8,1.205922e-06)
			(16,2.482003e-07)
			(32,1.052122e-08)
			(64,1.890633e-11)
      };
      %\addlegendentry{$p=2$}

       \addplot+[only marks,black,mark=diamond*,mark options={scale=1.5,fill=black}] coordinates{
			(1,2.148797e-10)
			(2,1.935097e-10)
			(4,1.278200e-10)
			(8,5.725109e-11)
			(16,1.173353e-11)
			(32,4.984988e-13)
			(64,8.627607e-16)

      };
      %\addlegendentry{$p=3$}
    \end{semilogyaxis}
  \end{tikzpicture}
  }
  \caption{}
  \label{fig:heatvsn}
\end{subfigure}
\begin{subfigure}[b]{0.48\textwidth}
    \centering
    \resizebox{\linewidth}{!}{
    \begin{tikzpicture}
    \begin{semilogyaxis}[
	xlabel={ Time Step Size, $\tau$ },
	ylabel={Energy Law Error},
        legend pos=north east,
        legend style={cells={align=left}}]

        \addplot+[only marks,black,mark=*,mark options={fill=black}] coordinates{
		(0.001, 5.422152e-02)
		(0.005, 4.876466e-02)
		(0.01,  4.282118e-02)
		(0.05,  1.740697e-02)
		(0.1,   7.445281e-03)
		(0.5,   2.837192e-04)
		(1,     4.677020e-05)
      };
      \addlegendentry{$p=1$}

       \addplot+[only marks,black,mark=square*,mark options={fill=black}] coordinates{
		(0.001, 3.360948e-06)
		(0.005, 4.065416e-06)
		(0.01,  3.930714e-06)
		(0.05,  1.644762e-06)
		(0.1,   7.004497e-07)
		(0.5,   2.655533e-08)
		(1,     4.374675e-09)
      };
      \addlegendentry{$p=2$}

       \addplot+[only marks,black,mark=diamond*,mark options={scale=1.5,fill=black}] coordinates{
		(0.001, 1.912861e-10)
		(0.005, 2.148797e-10)
		(0.01,  1.944818e-10)
		(0.05,  7.838352e-11)
		(0.1,   3.341394e-11)
		(0.5,   1.222245e-12)
		(1,     1.716266e-13)
      };
      \addlegendentry{$p=3$}

    \end{semilogyaxis}
  \end{tikzpicture}
  }
  \caption{}
  \label{fig:heatvstau}
  \end{subfigure}
\caption{Energy law error, \eqref{FOSLSenergylaw}, vs. (a) number of time steps, $n$ (with fixed
    $\tau=0.005$), and (b) time step
    size, $\tau$, for the
    FOSLS discretization of the
    heat equation, \eqref{FOSLSheat}, using various orders of the
    finite-element space ($p=1$ - linear; $p=2$ - quadratic; and $p=3$ -
    cubic). Mesh spacing is $h=\frac{1}{32}$.}
\label{fig:heatvstime}
\end{figure}
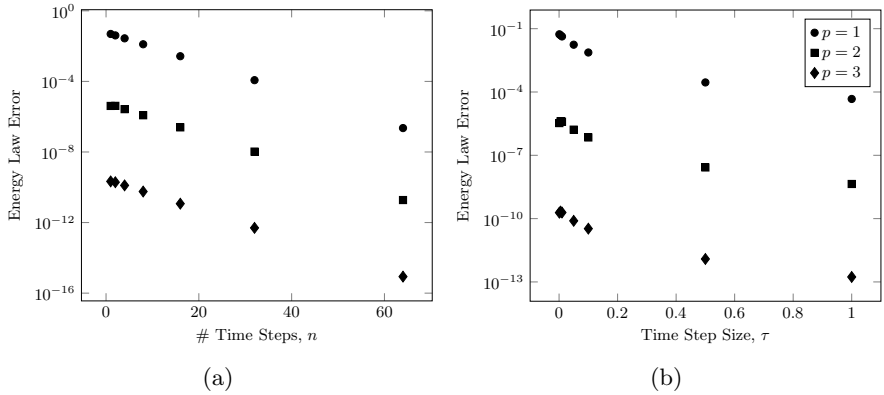

Figure \ref{fig:heatvstime} indicates how the
timestepping affects the convergence of the energy law.  As discussed
above, taking more time steps decreases the error in the energy law,
showing that we can improve the results on the bound, $\|u_n -
u_n^h\|$.  On the other hand, if only one time step is taken, the
convergence slightly worsens for small $\tau$, which is
consistent with the constants found in Theorem \ref{t:elaw} and
Corollary \ref{cor:badbound}.

\subsection{Stokes' Equations}
Next, we consider Stokes' Equations, \eqref{eq:stokessys}, and the
FOSLS discretization described above, \eqref{FOSLSstokesweak}.  The
same domain, $\Omega = (0,1)\times(0,1)$, is used, and we assume data
that yields the exact solution,
\begin{align*}
  \vec{u}(\vec{x},t) = \left ( \begin{array}{c}\sin(\pi x)\cos(\pi
                                 y)\\-\cos(\pi x)\sin(\pi y)\\\end{array}\right )e^{-2\pi^2t},\\
  p(\vec{x},t) = 0.
\end{align*}
This produces a $C^{\infty}$ solution that satisfies the appropriate
boundary conditions and regularity arguments needed for the bounds on
the energy law described above.

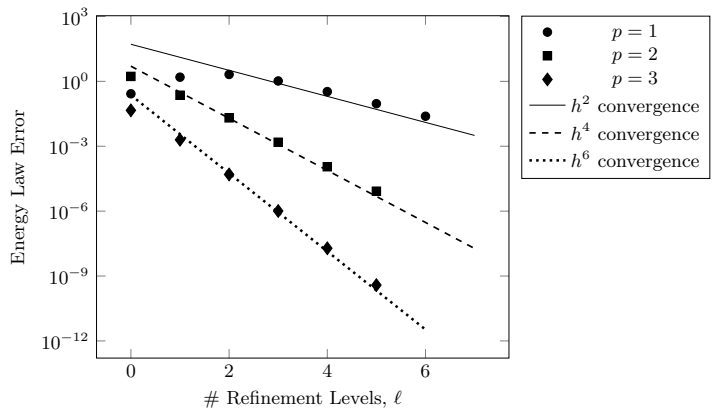
\begin{figure}[h!]
  \centering
  \scalebox{0.8}{
    \begin{tikzpicture}
    \begin{semilogyaxis}[
	xlabel={ \# Refinement Levels, $\ell$ },
	ylabel={Energy Law Error},
        legend pos=outer north east,
        legend style={cells={align=left}}]

        \addplot+[only marks,black,mark=*,mark options={fill=black}] coordinates{
		(0,2.661763e-01)
		(1,1.560125e+00)
		(2,2.084124e+00)
		(3,1.040653e+00)
		(4,3.351039e-01)
		(5,9.281074e-02)
		(6,2.439087e-02)
	%	(7,)
      };
      \addlegendentry{$p=1$}
      
        \addplot+[only marks,black,mark=square*,mark options={fill=black}] coordinates{
		(0,1.693060e+00)
                (1,2.278559e-01)
		(2,2.053783e-02)
		(3,1.529831e-03)
		(4,1.130386e-04)
		(5,8.333160e-06)
	%	(6,)
	%	(7,)
      };
      \addlegendentry{$p=2$}

      \addplot+[only marks,black,mark=diamond*,mark options={scale=1.5,fill=black}] coordinates{
                (0,4.568547e-02)
                (1,1.986097e-03)
		(2,4.972858e-05)
		(3,1.015216e-06)
		(4,1.922519e-08)
		(5,3.816432e-10)
	%	(6,)
	%	(7,)
      };
      \addlegendentry{$p=3$}

      \addplot+[ domain=0:7, no markers, solid, black ]
         { 2^(-2*(x-7))*3.164519e-03 };
      \addlegendentry {$h^2$ convergence}

      \addplot+[ domain=0:7, no markers, dashed, black, thick ]
         { 2^(-4*(x-7))*1.872939e-08 };
      \addlegendentry {$h^4$ convergence}

      \addplot+[ domain=0:6, no markers, dotted, black, very thick ]
         { 2^(-6*(x-5))*2.148797e-10 };
      \addlegendentry {$h^6$ convergence}

    \end{semilogyaxis}
  \end{tikzpicture}
  }
  \caption{Energy law error, \eqref{FOSLSenergylawstokes},  vs. number of
    mesh refinements, $\ell$ ($h=\frac{1}{2^{\ell}}$), for the
    FOSLS discretization of the
    Stokes' equation, \eqref{FOSLSstokes}, using various orders of the
    finite-element space ($p=1$ - linear; $p=2$ - quadratic; and $p=3$ -
    cubic). One time step is performed with
    $\tau = 0.005$.}
  \label{fig:stokesvsh}
\end{figure}

Similarly to the heat equation, Figure
  \ref{fig:stokesvsh} compares the convergence of the
energy law to zero as the mesh is refined for a fixed time step.
 Again, we see that the
convergence is $\mathcal O\left(h^{2p}\right)$, where $p$ is the order of the finite-element space being
considered, confirming that Theorem 1 can also be applied to the
time-dependent Stokes' equations.  Thus, the FOSLS
discretization can adhere to the energy
law for fluid-type systems, and has the potential for capturing the
relevant physics of other complex fluids.

\begin{figure}[h!]
  \begin{subfigure}[b]{0.48\textwidth}
    \centering
  \resizebox{\linewidth}{!}{
    \begin{tikzpicture}
    \begin{semilogyaxis}[
	xlabel={ \# Time Steps, $n$ },
	ylabel={Energy Law Error},
        legend pos=north east,
        legend style={cells={align=left}}]

        \addplot+[only marks,black,mark=*,mark options={fill=black}] coordinates{
			(1,9.281074e-02)
			(2,7.915581e-02)
			(4,5.530648e-02)
			(8,2.571345e-02)
			(16,5.417992e-03)
			(32,2.381807e-04)
			(64,4.566013e-07)
      };
      %\addlegendentry{$p=1$}

       \addplot+[only marks,black,mark=square*,mark options={fill=black}] coordinates{
			(1,8.333160e-06)
			(2,8.241148e-06)
			(4,5.347711e-06)
			(8,2.401087e-06)
			(16,4.938247e-07)
			(32,2.093303e-08)
			(64,3.761648e-11)
      };
      %\addlegendentry{$p=2$}

       \addplot+[only marks,black,mark=diamond*,mark options={scale=1.5,fill=black}] coordinates{
			(1,3.816432e-10)
			(2,3.376179e-10)
			(4,2.220855e-10)
			(8,1.000902e-10)
			(16,1.996719e-11)
			(32,9.557806e-13)
			(64,1.500380e-15)
      };
      %\addlegendentry{$p=3$}
    \end{semilogyaxis}
  \end{tikzpicture}
  }
  \caption{}
  \label{fig:stokesvsn}
\end{subfigure}
\begin{subfigure}[b]{0.48\textwidth}
    \centering
    \resizebox{\linewidth}{!}{
    \begin{tikzpicture}
    \begin{semilogyaxis}[
	xlabel={ Time Step Size, $\tau$ },
	ylabel={Energy Law Error},
        legend pos=north east,
        legend style={cells={align=left}}]

        \addplot+[only marks,black,mark=*,mark options={fill=black}] coordinates{
		(0.001,1.026062e-01)
		(0.005,9.281074e-02)
		(0.01,8.325750e-02)
		(0.05,3.431861e-02 )
		(0.1,1.452381e-02 )
		(0.5, 5.006199e-04 )
		(1, 7.183924e-05  )
      };
      \addlegendentry{$p=1$}

       \addplot+[only marks,black,mark=square*,mark options={fill=black}] coordinates{
		(0.001,6.787202e-06)
		(0.005,8.333160e-06)
		(0.01,7.917273e-06 )
		(0.05,3.245971e-06 )
		(0.1,1.364182e-06 )
		(0.5, 4.642510e-08 )
		(1, 6.582738e-09 )
      };
      \addlegendentry{$p=2$}

       \addplot+[only marks,black,mark=diamond*,mark options={scale=1.5,fill=black}] coordinates{
		(0.001,3.330509e-10)
		(0.005,3.816432e-10)
		(0.01,3.420233e-10)
		(0.05,1.369607e-10)
		(0.1,5.730882e-11)
		(0.5,1.904810e-12)
		(1,1.164360e-12)
      };
      \addlegendentry{$p=3$}

    \end{semilogyaxis}
  \end{tikzpicture}
  }
  \caption{}
  \label{fig:stokesvstau}
  \end{subfigure}
\caption{Energy law error, \eqref{FOSLSenergylawstokes}, vs. (a) number of time steps, $n$ (with fixed
    $\tau=0.005$), and (b) time step
    size, $\tau$, for the
    FOSLS discretization of the
    Stokes' equation, \eqref{FOSLSstokes}, using various orders of the
    finite-element space ($p=1$ - linear; $p=2$ - quadratic; and $p=3$ -
    cubic). Mesh spacing is $h=\frac{1}{32}$.}
\label{fig:stokesvstime}
\end{figure}
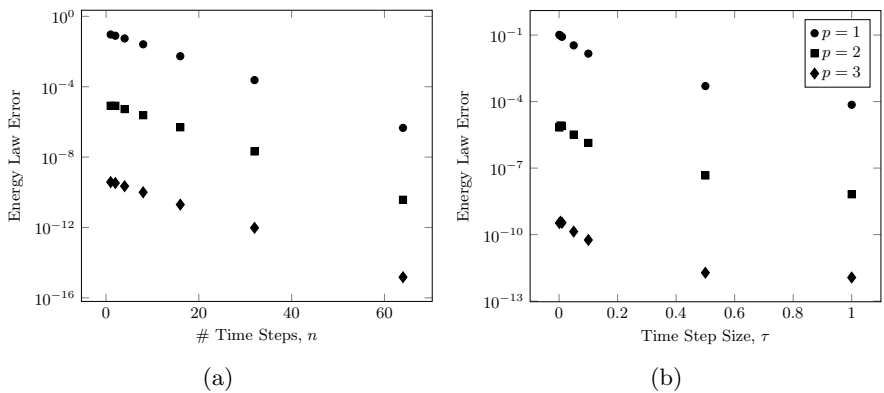

Figure \ref{fig:stokesvstime} again confirms how we expect the
timestepping to affect the convergence of the energy law.  Taking more
time steps decreases the error in the energy law, while the
convergence slightly worsens for small $\tau$.  These results also
highlight the similarities between the energy laws of the heat
equation and the time-dependent energy laws.  Since both have
underlying energy laws that are similar, the FOSLS discretization is
capable of capturing both with high accuracy.  

\section{Discussion: General Discrete Energy
  Laws}\label{sec:discussion}
The above results show that FOSLS discretizations of two specific PDEs
yield higher-order approximation of their underlying energy laws.  In
this section, we give a more general result, which suggests ideas for
extending this theory for other 
discrete energy laws using FOSLS discretizations.

\subsection{FOSLS Discrete Energy Laws}  
As encountered earlier, an energy law is an integral
relation of the form:
\begin{equation}\label{eq:testfunction}
  \left\langle \mathcal{L} u,u\right\rangle=0, \quad\mbox{for}\quad
  u(x,0)=u_0(x),
\end{equation}
where $\mathcal{L}: \widetilde{\mathcal{V}} \rightarrow \widetilde{\mathcal{V}}$ is a linear
operator (that involves boundary conditions),
$\widetilde{\mathcal{V}}$ is a function space, and
$u\in \widetilde{\mathcal{V}}$ is the solution to 
\begin{equation}\label{e:problem-space-time}
  \mathcal{L} u=0, \quad u(x,0)=u_0, \quad\mbox{for example:}\quad 
  \mathcal{L} = \partial_t  - \Delta. 
\end{equation}

To match the time-dependent problems considered in earlier sections,
  $\widetilde{\mathcal{V}}$ corresponds to a computational domain that
  involves both space and time, or as is often dubbed, a
  ``space-time'' domain: $\widetilde \Omega =
  \Omega\times[0,T]$. Further, we define a finite-dimensional space,
  $\widetilde V_h$ on $\widetilde{\Omega}$ corresponding to a
  triangulation of this space-time domain, as well as a ``stationary''
  finite-dimensional space, $V_h$, for $t=0$. Regarding such
  space-time discrete spaces and the related constructions, we refer
  the reader to the classical works by Johnson et
  al. \cite{2009JohnsonC-aa,1984JohnsonC_NavertU_PitkarantaJ-aa}, to
  \cite{1997MasudA_HughesT-aa} for space-time least squares
  formulations, and to \cite{2016LangerU_MooreS_NeumullerM-aa} for
  space-time iso-geometric analysis and a comprehensive literature
  review.

To present the FOSLS discretization in an abstract setting, we define
an extension of $u_0$ to the whole of $\widetilde{\Omega}$.  Without
loss of generality, we assume that the initial condition is a
piecewise polynomial and, more precisely, $u_0\in V_h$. Hence, we define
the extension $w_h\in \widetilde{\mathcal{V}}_h$ of $u_0$ so that
\(w_h(x,0)=u_0(x)\). This gives a non-homogenous problem with zero
initial guess, which is equivalent to \eqref{e:problem-space-time}. Its
weak form is: Find $u\in \widetilde{\mathcal{V}}$ such that for all
$v\in \widetilde{\mathcal{V}}_o$ there holds
\begin{equation}\label{e:weak-non-homogenous}
  u=\varphi + w_h, \quad\mbox{where}\quad    \left\langle\mathcal{L} \varphi , v\right\rangle = -\left\langle\mathcal{L} w_h,v\right\rangle, 
\end{equation}
Here, the space, $\widetilde{\mathcal{V}}_o$, is the subspace of
$\widetilde{\mathcal{V}}$ of functions with vanishing trace at $t=0$
(zero initial condition).  In a typical FOSLS setting, for the heat
equation, $u$ is a vector-valued function and the extension $w_h$
needs to be modified accordingly. We then have the following
space-time FOSLS discrete problem: Find
$u_h\in \widetilde{\mathcal{V}}_{h}$ such that for all
$v_h\in \widetilde{\mathcal{V}}_{h,o}$ there holds
\begin{equation}\label{dFOSLS}
  u_h = w_h+\varphi_h, \quad \mbox{where},\quad
  \left\langle\mathcal{L}\varphi_h,\mathcal{L}v_h\right\rangle =
  -\left\langle\mathcal{L} w_h,\mathcal{L} v_h\right\rangle.
\end{equation}

Restricting $\widetilde{\mathcal{V}}$ to a
finite-element space-time space,
$\widetilde{\mathcal{V}}_h \subset \widetilde{\mathcal{V}}$,
results in a restriction of $\mathcal{L}$ on
$\widetilde{\mathcal{V}}_h$, which is often called the ``discrete
operator''.

In the following, we keep $\left\langle\mathcal{L} u,u\right\rangle$ in all
estimates allowing for a non-homogenous right-hand side
in~\eqref{eq:testfunction}.  We now estimate the error in the energy
law, namely the difference
$\left\langle\mathcal{L} u,u\right\rangle - \left\langle\mathcal{L} u_h,u_h\right\rangle$.
\begin{theorem}\label{t:e-eh}
  If $u_h\in \widetilde{\mathcal{V}}_h$ is the 
  FOSLS solution of~\eqref{dFOSLS}. Then, the following estimate holds:
  \begin{equation}\label{e:e-eh}
    |\left\langle \mathcal{L} u,u\right\rangle-
    \left\langle \mathcal{L} u_h,u_h\right\rangle|\le C h^p \|u\|_{H^{p+1}}.
  \end{equation}
\end{theorem}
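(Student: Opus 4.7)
The plan is to reduce the energy-law discrepancy to the FOSLS residual norm $\|\mathcal{L}(u-u_h)\|$, whose $O(h^p)$ decay is the standard FOSLS best-approximation result. Setting $e := u - u_h$, a direct expansion gives
\begin{equation*}
\langle \mathcal{L} u, u\rangle-\langle \mathcal{L} u_h, u_h\rangle
= \langle \mathcal{L} u, e\rangle + \langle \mathcal{L} e, u\rangle - \langle \mathcal{L} e, e\rangle,
\end{equation*}
so by Cauchy--Schwarz it suffices to bound $\|\mathcal{L} u\|$, $\|\mathcal{L} e\|$, $\|u\|$, and $\|e\|$.

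First, $u$ satisfies~\eqref{e:weak-non-homogenous}, so $\|\mathcal{L} u\|$ is a data-dependent constant (zero in the homogeneous case $\mathcal{L} u = 0$), and $\|u\|$ is controlled via well-posedness of the continuous problem. The central step is the estimate for $\|\mathcal{L} e\|$. Subtracting~\eqref{dFOSLS} from the continuous variational identity $\langle \mathcal{L} u,\mathcal{L} v_h\rangle = 0$ for $v_h\in\widetilde{\mathcal{V}}_{h,o}$ yields the Galerkin orthogonality $\langle \mathcal{L} e, \mathcal{L} v_h\rangle = 0$ for every $v_h\in\widetilde{\mathcal{V}}_{h,o}$. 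Consequently, $\mathcal{L} u_h$ is the $L^2$-projection of $\mathcal{L} u$ onto $\mathcal{L}\widetilde{\mathcal{V}}_h$, and
\begin{equation*}
\|\mathcal{L} e\| \;=\; \inf_{v_h\in w_h+\widetilde{\mathcal{V}}_{h,o}} \|\mathcal{L}(u-v_h)\| \;\le\; C h^p \|u\|_{H^{p+1}},
\end{equation*}
where the last inequality combines continuity of $\mathcal{L}\colon H^{p+1}\to L^2$ with a standard polynomial-interpolation estimate on the space-time triangulation underlying $\widetilde{\mathcal{V}}_h$.

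To finish, invoke the FOSLS norm equivalence $\|v\|_{H^1}\simeq\|\mathcal{L} v\|$ on $\widetilde{\mathcal{V}}_o$ to obtain $\|e\|\le C\|\mathcal{L} e\|\le C h^p\|u\|_{H^{p+1}}$; then all three Cauchy--Schwarz contributions above are controlled by $C h^p\|u\|_{H^{p+1}}$ after absorbing the fixed factors $\|u\|$, $\|\mathcal{L} u\|$, and the higher-order cross term $\|\mathcal{L} e\|\|e\| = O(h^{2p})$ into the constant, giving~\eqref{e:e-eh}.

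The principal obstacle is not algebraic but analytic: establishing the FOSLS norm equivalence $\|\mathcal{L}(\cdot)\|\simeq\|\cdot\|_{H^1}$ for the \emph{space-time} operator $\mathcal{L}$, including identification of the correct trace/initial-condition space on the hyperplane $t=0$ over which $\widetilde{\mathcal{V}}_{h,o}$ is defined. For the stationary elliptic and reaction-diffusion FOSLS formulations this equivalence is classical and covered by the references in Section~\ref{sec:intro}; in the space-time setting it must instead be derived within the framework of \cite{2009JohnsonC-aa,1984JohnsonC_NavertU_PitkarantaJ-aa,1997MasudA_HughesT-aa,2016LangerU_MooreS_NeumullerM-aa}. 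Once this equivalence is in place, the best-approximation bound on $\|\mathcal{L} e\|$ and the resulting $L^2$ bound on $e$ become routine, and the theorem follows by assembling them in the decomposition above.
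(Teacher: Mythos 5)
Your proposal is correct and follows essentially the same route as the paper's proof: expand the difference of the two quadratic forms into terms linear in the error $u-u_h$, bound each term by Cauchy--Schwarz, and invoke the standard FOSLS approximation results to get the $\mathcal{O}(h^p)$ factor. The only differences are cosmetic --- you use a symmetric three-term expansion and unpack the cited error estimate via Galerkin orthogonality, residual best approximation, and the norm equivalence $\|\mathcal{L}\cdot\|\simeq\|\cdot\|_{H^1}$, whereas the paper uses a two-term splitting and simply appeals to continuity of $\mathcal{L}:H^1\to L^2$ together with the standard FOSLS $H^1$ error estimate (and, like you, it leaves the space-time norm equivalence and the $u$-dependent factors absorbed in the constant unexamined).
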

\begin{proof}
  For the left side of~\eqref{e:e-eh} we have
  \begin{eqnarray*}
    \left\langle\mathcal{L}u,u\right\rangle -\left\langle\mathcal{L}u_h,u_h\right\rangle
    & = &   \left\langle\mathcal{L}u,u\right\rangle - \left\langle\mathcal{L}u,u_h\right\rangle + \left\langle\mathcal{L}(u-u_h),u_h\right\rangle\\
 & = &
    \left\langle\mathcal{L}u,u-u_h\right\rangle +
    \left\langle\mathcal{L}(u-u_h),u_h\right\rangle.
  \end{eqnarray*}
  Using the continuity of $\mathcal{L}$ and the standard error estimates for
  the FOSLS discretization,
  \begin{eqnarray*}
    |\left\langle\mathcal{L}u,u\right\rangle - \left\langle\mathcal{L}u_h,u_h\right\rangle|
    & \le &            |\left\langle \mathcal{L}u,u-u_h\right\rangle| + 
            |\left\langle\mathcal{L}(u-u_h),u_h\right\rangle|\\
    & \le & C\|u-u_h\|_{H^1} \left(\|u\|_{H^1}+\|u_h\|_{H^1}\right)\\
    & \le & Ch^p \|u\|_{H^{p+1}}. 
  \end{eqnarray*}
  This concludes the proof. 
\end{proof}

%% one can do the inequality above so that one of the terms has w_h
%% for arbitrary w_h, because (Lu_h,u_h) - (L u_h,w_h) for any w_h
%% that satisfies the initial conditions.
\subsection{Exact Discrete Energy Law}  
Next, we provide a necessary and sufficient condition for the
FOSLS discretization to \emph{exactly} satisfy an energy law, namely conditions under which we have
\(\left\langle\mathcal{L} u_h,u_h\right\rangle =  \left\langle\mathcal{L} u,u\right\rangle\).
Recall the assumption that $u_0\in \mathcal{V}_h$.  Consider two
standard projections on the finite-element space,
$\widetilde{\mathcal{V}}_{h,o}$: (1) the Galerkin projection
$\Pi_h: \widetilde{\mathcal{V}}\mapsto \widetilde{\mathcal{V}}_{h,o}$;
and (2) the $L^2(\widetilde{\Omega})$-orthogonal projection,
$Q_h: L^2(\widetilde{\Omega}) \mapsto \widetilde{\mathcal{V}}_{h,o}$.
These operators are defined in a standard fashion:
 \begin{eqnarray*}
&&   \left\langle \mathcal{L} \Pi_h u,v_h \right\rangle := \left\langle \mathcal{L} u, 
   v_h \right\rangle, \quad \mbox{for all} ~ v_h \in \widetilde{\mathcal{V}}_{h,o} ~~ \mbox{and} ~~
   u \in \widetilde{\mathcal{V}},\\
&&   \left\langle Q_h u,v_h \right\rangle := \left\langle u, 
   v_h \right\rangle, \quad \mbox{for all} ~ v_h \in \widetilde{\mathcal{V}}_{h,o} ~~ \mbox{and} ~~
   u \in L^2(\widetilde{\Omega}).
 \end{eqnarray*}
Consider a well-known identity (see for example~\cite{1992XuJ-aa} for
 the case of symmetric $\mathcal{L}$) relating $\Pi_h$ and $Q_h$, which
 is used in~the later proof of Theorem~\ref{t:gelaw}.
\begin{lemma}\label{lem:project}
  The projections $Q_h$ and $\Pi_h$ satisfy the relation
  \begin{equation}\label{pq}
    \mathcal{L}_h\Pi_h = Q_h\mathcal{L},
  \end{equation}
  where $\mathcal{L}_h: \widetilde{\mathcal{V}}_h\mapsto\widetilde{\mathcal{V}}_h$ is the restriction of $\mathcal{L}$ on $\widetilde{\mathcal{V}}_h$, namely,
  \[
    \left\langle \mathcal{L}_hv_h,w_h\right\rangle =
    \left\langle \mathcal{L}v_h,w_h\right\rangle, \quad \mbox{for all}\quad
    v_h,\;w_h\in \widetilde{\mathcal{V}}_h.
  \]
\end{lemma}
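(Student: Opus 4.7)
The plan is to verify $\mathcal{L}_h \Pi_h = Q_h \mathcal{L}$ by chaining together the three defining relations for $\mathcal{L}_h$, $\Pi_h$, and $Q_h$, tested against an arbitrary element $v_h$ of the discrete test space, and then promoting the resulting inner-product identity to a pointwise (functional) identity using that both sides already lie in that discrete space.

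Given an arbitrary $u\in\widetilde{\mathcal{V}}$, I would fix a test function $v_h\in \widetilde{\mathcal{V}}_{h,o}$ and compute $\langle \mathcal{L}_h \Pi_h u, v_h\rangle$ by peeling off each projector in turn. Since $\Pi_h u \in \widetilde{\mathcal{V}}_{h,o}\subset \widetilde{\mathcal{V}}_h$, the definition of $\mathcal{L}_h$ applies and yields $\langle \mathcal{L}_h \Pi_h u, v_h\rangle = \langle \mathcal{L}\Pi_h u, v_h\rangle$. Next, the defining Galerkin property of $\Pi_h$ rewrites the right-hand side as $\langle \mathcal{L} u, v_h\rangle$, and finally the definition of the $L^2(\widetilde{\Omega})$-orthogonal projector $Q_h$ rewrites this as $\langle Q_h \mathcal{L} u, v_h\rangle$. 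Subtracting gives
\[
\langle \mathcal{L}_h \Pi_h u - Q_h \mathcal{L} u, \, v_h\rangle = 0 \quad \text{for every } v_h \in \widetilde{\mathcal{V}}_{h,o}.
\]
Because both $\mathcal{L}_h \Pi_h u$ and $Q_h \mathcal{L} u$ belong to the finite-dimensional space $\widetilde{\mathcal{V}}_{h,o}$, testing against a basis of that space forces the difference to vanish, giving~\eqref{pq}.

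There is no real obstacle; the statement is essentially a bookkeeping identity. The only subtle point worth flagging is that $\mathcal{L}_h$ is defined on $\widetilde{\mathcal{V}}_h$ while the two projectors land in $\widetilde{\mathcal{V}}_{h,o}$, so one must check that the inclusion $\widetilde{\mathcal{V}}_{h,o}\subset \widetilde{\mathcal{V}}_h$ makes each step of the chain admissible and that $\mathcal{L}_h \Pi_h u$ is itself an element of $\widetilde{\mathcal{V}}_{h,o}$ (so that the final cancellation is legitimate). Once this is observed, the identity amounts to the direct restatement $\mathcal{L}_h = Q_h \mathcal{L}|_{\widetilde{\mathcal{V}}_h}$ combined with the Galerkin characterization of $\Pi_h$.
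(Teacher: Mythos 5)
Your proposal is correct and follows essentially the same route as the paper: both chain the defining relations of $\mathcal{L}_h$, $\Pi_h$, and $Q_h$ and use that $\mathcal{L}_h\Pi_h u$ and $Q_h\mathcal{L}u$ lie in the discrete space to promote the tested identity to an operator identity. The only cosmetic difference is that you test directly against $v_h\in\widetilde{\mathcal{V}}_{h,o}$ and invoke finite-dimensionality at the end, while the paper tests against arbitrary $w\in\widetilde{\mathcal{V}}$ and inserts $Q_h w$; the subtlety you flag about where $\mathcal{L}_h\Pi_h u$ lives is present in the paper's argument as well.
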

\begin{proof}
  The result easily follows from the definitions of $Q_h$, $\Pi_h$,
  $\mathcal{L}_h$, and the fact that
  $\mathcal{L}_h\Pi_hv \in \widetilde{\mathcal{V}}_h$.  For
  $v\in \widetilde{\mathcal{V}}$, and $w\in \widetilde{\mathcal{V}}$
  we have
  \begin{eqnarray*}
    \left\langle \mathcal{L}_h\Pi_hv,w\right\rangle
    & = & \left\langle\mathcal{L}_h\Pi_hv,Q_hw\right\rangle =  \left\langle\mathcal{L}\Pi_hv,Q_hw\right\rangle\\
    & = & \left\langle\mathcal{L}v,Q_hw\right\rangle =  \left\langle Q_h\mathcal{L}v,Q_hw\right\rangle
                                               = \left\langle Q_h\mathcal{L}v,w\right\rangle.
  \end{eqnarray*}
This completes the proof. 
\end{proof}
Note that we use
$Q_h \chi_h = \Pi_h\chi_h = \chi_h$ for all
$\chi_h\in \widetilde{\mathcal{V}}_{h,o}$. In general, such an
identity is not true for $\chi_h\in
\widetilde{\mathcal{V}}_h$. However, we can relate the solution to~\eqref{dFOSLS} to a discrete analogue of
the energy law~\eqref{eq:testfunction} using Lemma~\ref{lem:project}.
Further, notice that the FOSLS solution, $u_h$,
satisfies $\left\langle\mathcal{L} u_h, \mathcal{L}\chi_h \right\rangle = 0$ only
for $\chi_h\in \widetilde{\mathcal{V}}_{h,o}$ corresponding to a zero
initial guess.  Thus, it is not obvious how to estimate
$\left\langle\mathcal{L} u_h,u_h\right\rangle-\left\langle\mathcal{L} u,u\right\rangle$. 
\begin{theorem}\label{t:gelaw}
  The solution $u_h$ of~\eqref{dFOSLS} satisfies the discrete energy
  law
  \(\left\langle \mathcal{L}u_h,u_h\right\rangle =\left\langle \mathcal{L}u,u\right\rangle\)
  if and only if there exists a $w_h\in \widetilde{\mathcal{V}}_h$
  satisfying the initial condition $w_h(x,0) = u_0(x)$ and if 
    \(\left\langle\mathcal{L} u_h,w_h \right\rangle = \left\langle\mathcal{L} u,u\right\rangle\).
\end{theorem}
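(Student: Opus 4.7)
The plan is to prove the biconditional by checking each direction separately. One direction is essentially a choice of witness, while the other requires the projection identity recorded in Lemma~\ref{lem:project}.

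For the forward direction ($\Rightarrow$), I would simply exhibit $w_h := u_h$ itself as the required witness. By construction of the FOSLS formulation~\eqref{dFOSLS}, we have $u_h = w_h + \varphi_h$ with $\varphi_h \in \widetilde{\mathcal{V}}_{h,o}$, so $u_h(x,0) = u_0(x)$ automatically; thus the initial-trace requirement is met. The inner-product condition $\langle\mathcal{L}u_h, w_h\rangle = \langle\mathcal{L}u, u\rangle$ then follows immediately from the assumed discrete energy law with this choice of $w_h$.

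For the reverse direction ($\Leftarrow$), suppose such a $w_h$ exists. Since both $u_h$ and $w_h$ lie in $\widetilde{\mathcal{V}}_h$ and share the initial trace $u_0$, the difference $\chi_h := u_h - w_h$ belongs to $\widetilde{\mathcal{V}}_{h,o}$. Expanding
\[
\langle\mathcal{L}u_h, u_h\rangle = \langle\mathcal{L}u_h, w_h\rangle + \langle\mathcal{L}u_h, \chi_h\rangle = \langle\mathcal{L}u, u\rangle + \langle\mathcal{L}u_h, \chi_h\rangle
\]
reduces the claim to the vanishing of the single residual pairing $\langle\mathcal{L}u_h, \chi_h\rangle$.

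The main obstacle is precisely this last identity: the FOSLS normal equations only provide $\langle\mathcal{L}u_h, \mathcal{L}v_h\rangle = 0$ for $v_h \in \widetilde{\mathcal{V}}_{h,o}$, whereas we need the Galerkin-type orthogonality $\langle\mathcal{L}u_h, \chi_h\rangle = 0$. To bridge this gap I plan to invoke Lemma~\ref{lem:project}: because $\chi_h \in \widetilde{\mathcal{V}}_{h,o}$, one rewrites
\[
\langle\mathcal{L}u_h, \chi_h\rangle = \langle Q_h\mathcal{L}u_h, \chi_h\rangle = \langle\mathcal{L}_h\Pi_h u_h, \chi_h\rangle,
\]
and then argues that $\mathcal{L}_h\Pi_h u_h$ annihilates every such $\chi_h$ by combining the defining Galerkin relation for $\Pi_h$ with the FOSLS normal equations satisfied by $u_h$. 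Managing the mismatch between the least-squares orthogonality $\langle\mathcal{L}u_h, \mathcal{L}v_h\rangle = 0$ and the Galerkin-type orthogonality needed here, while preserving the initial-condition bookkeeping, is where the delicate part of the argument lies.
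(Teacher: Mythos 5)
Your forward direction is fine: taking $w_h = u_h$ as the witness works, since by construction $u_h = w_h + \varphi_h$ with $\varphi_h \in \widetilde{\mathcal{V}}_{h,o}$, so $u_h(\cdot,0) = u_0$, and the required identity is then just the assumed energy law. (The paper actually proves something stronger, namely $\left\langle\mathcal{L}u_h,u_h\right\rangle = \left\langle\mathcal{L}u_h,w_h\right\rangle$ for \emph{every} extension $w_h$ of $u_0$, which gives both implications at once and the sharper relation~\eqref{inf}.)

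The genuine gap is in the reverse direction, exactly at the step you flag as delicate. After reducing to $\left\langle\mathcal{L}u_h,\chi_h\right\rangle = 0$ for $\chi_h = u_h - w_h \in \widetilde{\mathcal{V}}_{h,o}$, your bridge
$\left\langle\mathcal{L}u_h,\chi_h\right\rangle = \left\langle Q_h\mathcal{L}u_h,\chi_h\right\rangle = \left\langle\mathcal{L}_h\Pi_h u_h,\chi_h\right\rangle$,
followed by the claim that $\mathcal{L}_h\Pi_h u_h$ annihilates every such $\chi_h$, is circular: unwinding the definitions gives $\left\langle\mathcal{L}_h\Pi_h u_h,\chi_h\right\rangle = \left\langle\mathcal{L}\Pi_h u_h,\chi_h\right\rangle = \left\langle\mathcal{L}u_h,\chi_h\right\rangle$, i.e.\ you return to the quantity you set out to kill, and the FOSLS normal equations never enter because no test function of the form $\mathcal{L}v_h$ with $v_h\in\widetilde{\mathcal{V}}_{h,o}$ ever appears. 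Asserting that $Q_h\mathcal{L}u_h = \mathcal{L}_h\Pi_h u_h$ vanishes against $\widetilde{\mathcal{V}}_{h,o}$ \emph{is} the Galerkin-type orthogonality you were trying to prove, and least-squares orthogonality alone does not deliver it. The paper's mechanism applies Lemma~\ref{lem:project} in the opposite direction, to $\mathcal{L}^{-1}\chi_h$ rather than to $u_h$: since $\chi_h\in\widetilde{\mathcal{V}}_{h,o}$, one writes $\chi_h = Q_h\chi_h = Q_h\mathcal{L}\bigl(\mathcal{L}^{-1}\chi_h\bigr) = \mathcal{L}_h\Pi_h\bigl(\mathcal{L}^{-1}\chi_h\bigr)$, so the test function is represented as the (discrete) action of $\mathcal{L}$ on $v_h := \Pi_h\mathcal{L}^{-1}\chi_h \in \widetilde{\mathcal{V}}_{h,o}$, and the pairing becomes $\left\langle\mathcal{L}u_h,\mathcal{L}v_h\right\rangle$, which vanishes by~\eqref{dFOSLS}. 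Note that this uses the invertibility of $\mathcal{L}$ on the continuous space, an ingredient entirely absent from your sketch; without some device of this kind the argument cannot close.
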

\begin{proof}
  % Note that~\eqref{dFOSLS} holds for any such 
  % extension, not only the one defined in~\eqref{e:extension}. 
 Let $w_h\in \widetilde{\mathcal{V}}_h$ be any extension of $u_0\in V_h$ in 
  $\widetilde{\Omega}$, that is, $w_h$ satisfies the initial condition. 
  The following relations follow directly from the definitions given
  earlier, Equation~\eqref{dFOSLS}, and Lemma~\ref{lem:project}.
  \begin{eqnarray*}
    \left\langle\mathcal{L} u_h, u_h \right\rangle & = & 
    \left\langle\mathcal{L} u_h, \underbrace{(u_h-w_h)}_{\in \widetilde{\mathcal{V}}_{h,o}}\right\rangle +  
    \left\langle\mathcal{L} u_h, w_h \right\rangle \\
& =&  
    \left\langle\mathcal{L} u_h, Q_h(u_h-w_h) \right\rangle + \left\langle\mathcal{L} u_h, w_h \right\rangle \\
& =&  
    \left\langle\mathcal{L} u_h, Q_h\mathcal{L}\mathcal{L}^{-1}(u_h-w_h) \right\rangle +      \left\langle\mathcal{L} u_h, w_h \right\rangle \\
& =  &
    \left\langle\mathcal{L} u_h, \mathcal{L}\underbrace{\Pi_h\mathcal{L}^{-1}(u_h-w_h)}_{v_h \in \widetilde{\mathcal{V}}_{h,o}}\right\rangle  +      \left\langle\mathcal{L} u_h, w_h \right\rangle = \left\langle\mathcal{L} u_h, w_h \right\rangle. 
  \end{eqnarray*}
  In the last identity, we use the fact that
  $v_h = \Pi_h\mathcal{L}^{-1}(u_h-w_h)$ is an element of $\widetilde{\mathcal{V}}_{h,o}$ and the first term on the right side vanishes (by Equation~\eqref{dFOSLS}). As a result, we have
  \begin{equation*}
    \left\langle\mathcal{L} u, u \right\rangle -
    \left\langle\mathcal{L} u_h, u_h \right\rangle 
    =
    \left\langle\mathcal{L} u, u \right\rangle -
    \left\langle\mathcal{L} u_h, w_h \right\rangle. 
  \end{equation*}
  which gives the desired necessary and sufficient condition.
\end{proof}
From the proof, we immediately obtain the following relation,
\begin{equation}\label{inf}
  |
  \left\langle\mathcal{L} u,u \right\rangle -
  \left\langle\mathcal{L} u_h,u_h \right\rangle 
  | = \inf_{w_h}\left\{ |    \left\langle\mathcal{L} u,u \right\rangle - \left\langle\mathcal{L} u_h,w_h \right\rangle|, \; %\big|\;
    w_h(\cdot,0) = u_0 \right\}.
\end{equation}
In addition to the estimate in Theorem~\ref{t:e-eh}, it is plausible
that one can use the right side of~\eqref{inf} to obtain a sharper
result. While this is beyond the scope of this paper, some comments
are in order. The difficulties associated with each particular case in
hand (heat equation, Stokes' equation, etc.) amount to estimating the
quantity on the right side of~\eqref{inf} and such estimates depend on
the spaces chosen for discretization and how well the timestepping
approximates the space-time formulation.  Sharper estimates on the
error in discrete energy law, which uses~\eqref{inf}, can lead to
sharper bounds on the constant defined in~\eqref{e:truncation}.

\subsection{Conclusions}
In
this work, we have shown numerically that convergence of the discrete
energy law is of order
higher than the finite-element approximation order for two typical transient
problems. Thus, while it is known that the FOSLS method may have
issues with adherence to some conservation
laws (i.e., mass conservation), energy conservation is not such an issue, and can be
satisfied with high
accuracy. The rigorous theoretical justification of such claims are
topics of current and future research.

\section*{Acknowledgements}
The work of J.~H.~Adler was supported in part by NSF DMS-1216972.
I.~V.~Lashuk was supported in part by NSF DMS-1216972 (Tufts
University) and DMS-1418843 (Penn State).  S.~P.~MacLachlan was
partially supported by an NSERC Discovery Grant. The research of
L.~T.~Zikatanov was supported in part by NSF DMS-1720114 and the
Department of Mathematics at Tufts University.

\bibliographystyle{splncs}
\bibliography{mybib}

\end{document}